\newtheorem{theorem}{Theorem}[section]
\newtheorem{corollary}[theorem]{Corollary}
\newtheorem{pro}[theorem]{Proposition}
\newtheorem{remark}[theorem]{Remark}
\newtheorem{open problem}[theorem]{Open Problem}
\newcommand{\IN}{\mathbb{N}}
\newcommand{\e}{\mbox{e}}
\newcommand{\K}{\mathscr{K}}
\newcommand{\E}{\mathscr{E}}
\newcommand{\s}{\sum_{n=0}^{\infty}}
\newcommand{\su}{\sum_{n=1}^{\infty}}
\newcommand{\la}{\lambda}
\newcommand{\G}{\Gamma}
\newcommand{\p}{\psi}
\newcommand{\va}{\varphi}
\numberwithin{equation}{section}
\begin{document}

\def\square{\hfill${\vcenter{\vbox{\hrule height.4pt \hbox{\vrule width.4pt
height7pt \kern7pt \vrule width.4pt} \hrule height.4pt}}}$}

\title[]
{\small Notes on Generalized Gr\"otzsch Ring Function and Generalized Hersch-Pfluger Distortion Function$^{\ast}$}

%%the first author

\author{Qi Bao$^{1}$}

\address{Qi Bao$^{1}$, $^{1}$School of Mathematics and Statistics, Huangshan University, Huangshan 245021, Anhui, China}

\email{600218@hsu.edu.cn}

%% Corresponding author

\author{Miao-Kun Wang$^{2,\ast\ast}$}

\address{Miao-Kun Wang$^{2}$, $^{2}$Department of Mathematics, Huzhou University, Huzhou 313000, Zhejiang, China}

\email{wmk000@126.com, wangmiaokun@zjhu.edu.cn}

\subjclass[2010]{11F03, 33C05}

\keywords{generalized Gr\"otzsch ring function; generalized Ramanujan's modular equation; generalized Hersch-Pfluger distortion function; quasiconformal mappings}

\thanks{$^{\ast}$ This research was supported by Zhejiang Provincial Natural Science Foundation of China under Grant No. LY24A010011.}

\thanks{$^{\ast\ast}$Corresponding author. Email address: wmk000@126.com}

\begin{abstract}
For $a\in(0,1)$, $r\in(0,1)$ and $K\in(1,\infty)$, let $\mu_{a}(r)$ and $\varphi_{K}^{a}(r)$ be the generalized Gr\"{o}tzsch ring function and generalized Hersch-Pfluger distortion function. In the past few years, the functions $\mu_{a}(r)$ and $\varphi_{K}^{a}(r)$, and their special cases $\mu_{1/2}(r)$ and $\varphi_{K}^{1/2}(r)$ have been playing the very important role on the theory of quasiconformal mappings and (generalized) Ramanujan's modular equations. In this paper, we present a series expansion of $\mu_{a}(r)$, and thus prove that the function $r\mapsto -[\mu_{a}(r)-\log{(e^{R(a)/2})/r}]$  is absolutely monotonic on $(0,1)$. Here $R(a)$ is the Ramanujan constant. In addition, we also investigate the submultiplicative and power  submultiplicative properties of $\varphi_{K}^{a}(r)$, and establish some new inequalities for $\varphi_{K}^{a}(r)$ in terms of elementary functions.
\end{abstract}

\maketitle
%section1
\section{Introduction}
In geometric function theory, the solutions of many extremal problems are conformal invariants, and they have expressions in terms of special functions. For example,  the Gr\"otzsch ring function, which is a ratio of the complete elliptic integral of the first kind, represents the conformal modulus of the Gr\"otzsch ring $B^2\backslash [0,r]$ for $r\in(0,1)$, where $B^2$ is the unit disk in the complex plane. The Hersch-Pfluger distortion function is defined by use of the Gr\"otzsch ring function and its inverse function, which gives the maximum distortion of the class of $K$-quasiconformal mappings of the unit disk onto itself with the origin fixed. It is well known that the above functions are the most fundamental and important functions in the theory of quasiconformal mappings in the plane, for details, please refer to \cite{Ah, AVVb, Lehto2}.

Throughout this paper, we always let $r'=\sqrt{1-r^2}$ for $r\in [0,1]$, $\IN$ denote by the set of positive integers and $\infty$ the positive infinity. For complex number $x$ with ${\rm Re}\,x>0$, let
\begin{align*}
\Gamma(x)=\int_0^{\infty}t^{x-1}\e^{-t}dt, \,\,
\psi(x)=\frac{\Gamma'(x)}{\Gamma(x)}
\end{align*}
be the classical Euler gamma and psi (digamma) functions, respectively (cf. \cite[6.1.1-6.2.1]{AS}). For complex numbers $a, b$ and $c$ with $c\neq 0,-1,-2,\cdots$, the Gaussian hypergeometric function is defined by
(cf. \cite[15.1.1]{AS})
\begin{align}\label{2F1}
F(a,b;c;x)={}_2F_1(a,b;c;x)=\sum_{n=0}^{\infty}
\frac{(a)_n(b)_n}{(c)_n} \frac{x^n}{n!},\,\,|x|<1,
\end{align}
where $(a)_n$ is the Pochhammer symbol or shifted factorial defined as $(a)_0=1$, and $(a)_n=a(a+1)(a+2)\cdots (a+n-1)=\G(n+a)/\G(a)$ for $n\in\IN$. $F(a,b;c;x)$ is zero-balanced if $c=a+b$, and the  zero-balanced hypergeometric function satisfies the following Ramanujan's asymptotic formula:
\begin{equation*}
B(a,b)F(a,b;a+b;x)+\log(1-x)=R(a,b)+O((1-x)\log(1-x)), \quad x\rightarrow 1.
\end{equation*}
Here $B(a,b)\equiv  \Gamma(a)\Gamma(b)/\Gamma(a+b)$ is the beta function and 
(cf. \cite{qiu2017})
\begin{align}\label{R(a,b)}
R(a,b)\equiv -2\gamma-\p(a)-\p(b), 
\end{align}
where $\gamma=\lim \limits_{n\to \infty} (\sum_{k=1}^n 1/k-\log n)=0.577215\cdots$ is the Euler-Mascheroni constant. Let $a\in(0,1/2]$ and $b=1-a$, formula \eqref{R(a,b)} degenerates into the so-called Ramanujan constant or Ramanujan $R$-function (cf. \cite{QMH2020})
\begin{align*}
R(a) \equiv R(a,1-a)= -2\gamma-\p(a)-\p(1-a).
\end{align*}

As a ratio of two Gaussian hypergeometric functions, the generalized Gr\"otzsch ring function
\begin{align}\label{uar}
\mu_a(r)=\frac{\pi}{2\sin(\pi a)}\frac{ F\left(a,1-a;1; 1-r^2\right) }{F \left(a,1-a;1;r^2\right)},\quad a\in(0,1)
\end{align}
is introduced by Qiu and Vuorinen in 1999 (cf. \cite{QV1999}), which extend the classical Gr\"otzsch ring function $\mu(r) \equiv \mu_{1/2}(r)$. As early as 1829, Jacobi proved
\begin{align}\label{Jacobi}
{\rm e}^{\mu(r)+\log r}=4 \prod_{n=1}^{\infty}
\left(\frac{1+q^{2n}}{1+q^{2n-1}}\right)^4
\end{align}
for $r\in(0,1)$, where $q={\rm e}^{-2\mu(r)}$ (cf. \cite{Jacobi}).   Qiu and Vuorinen \cite{QV1999} established an infinite product inequality for generalized Gr\"otzsch ring function and then derived another infinite product identity for the Gr\"otzsch ring function. Specifically speaking, for $a\in(0,1/2]$ and $r\in(0,1)$, let
\begin{align*}
r_0=r'=\sqrt{1-r^2} \quad \text{and}\quad  r_n=\frac{2\sqrt{r_{n-1}}}{1+r_{n-1}}.
\end{align*}
Then they showed
\begin{align}\label{ua+logr}
\prod_{n=0}^{\infty} (1+r_n)^{2^{-n}} \leq {\rm e}^{ \mu_a(r)+\log r }
\leq \frac{ {\rm e}^{\frac{R(a)}{2}} }{4} \prod_{n=0}^{\infty} (1+r_n)^{2^{-n}},
\end{align}
with equality in each instance if and only if $a=1/2$, that is,
\begin{align*}
{\rm e}^{ \mu(r)+\log r }=\prod_{n=0}^{\infty} (1+r_n)^{2^{-n}}.
\end{align*}
The above equation is more convenient to calculate the value of $\mu(r)$ than (\ref{Jacobi}), since the right-hand side of the latter is involved in $\mu(r)$. 
Very recently, Yang, Wang and Tian \cite{Yang-Wang-Tian-2024} found another infinite series expansion formula for $\mu(r)$, and thus obtained some higher-order monotonicity properties for certain functions involving the Gr\"{o}tzsch ring function. 

In this paper, by studying the derivative of the generalized  Gr\"otzsch ring function $\mu_{a}(r)$ with respect to $r$, a new infinite series expansion formula for $\mu_{a}(r)$ will be given (see Theorem \ref{iduar}). As by-product, a conjecture on the absolute monotonicity of a function involving the Gaussian hypergeometric function will be proved to be true.

Let $a\in(0,1)$, $r\in(0,1)$ and $p\in(0,\infty)$. Then the Ramanujan's generalized modular equation with signature $1/a$ and order (or degree) $p$ is
\begin{align}\label{Ramanujan}
\frac{F(a,1-a;1;1-s^2)}{F(a,1-a;1;s^2)}
=p\, \frac{F(a,1-a;1;1-r^2)}{F(a,1-a;1;r^2)}
\end{align}
(cf. \cite{Be3,Be5,Be6}). In particular, when $a=1/2$, equation \eqref{Ramanujan} reduces to the classical modular equation, thus the word ``generalized" alludes to the fact that the parameter $a\in(0,1)$ is arbitrary. The Ramanujan's generalized modular equation (\ref{Ramanujan}) has been developed by leading mathematicians for over a century. The Ramanujan classical modular equation  was firstly studied by Jacobi in the nineteenth century. In 1920s, Ramanujan extensively studied modular equations (cf. \cite{Be3,Be6}) and provided numerous algebraic identities for the solutions $s$ of (\ref{Ramanujan}) at rational values of $a$ such as $1/6$, $1/4$, $1/3$. Borwein and Borwein, Venkatachaliengar, and Berndt et al. made great contribution to this subject, see \cite{Borwein1,Borwein2,CL1998,VD,Be3,Be5,Be6,Liu,CL2003,BB2007,R1990,V2015,Liu2003} for more details.

It is worth noting that Anderson, Qiu, Vamanamurthy and Vuorinen \cite{Vuorinen1996,AQVV} were surprised to find the relationship between the generalized Gr\"otzsch ring function $\mu_a(r)$ and the generalized Ramanujan's modular equation. Indeed, by (\ref{uar}), we can rewrite (\ref{Ramanujan}) as
\begin{align}\label{Ramanujan1}
\mu_a(s)=p \mu_a(r),
\end{align}
so that the solution of Ramanujan generalized modular equation (\ref{Ramanujan1}) is then given by
\begin{align}
s=\varphi_K^a(r)\equiv \mu_a^{-1}\left(\frac{\mu_a(r)}{K}\right),
\,\, p=\frac1K,
\end{align}
which is called generalized Hersch-Pfluger distortion function (cf. \cite{AQVV,WQCJ2012}). Obviously, as a function of $r$, $\varphi_K^a(r)$ is an increasing homeomorphism from $[0,1]$ onto itself. When $a=1/2$, $\varphi_K(r)\equiv \varphi_K^{1/2}(r)$ is exactly the well-known Hersch-Pfluger distortion function. In the past few decades, many elegant properties of $\varphi_K(r)$ and $\varphi_K^a(r)$ have been obtained, such as
\begin{align}\label{va+va'}
\varphi_K^a(r)^2+\varphi_{1/K}^a(r')^2=1
\end{align}
for all $a\in(0,1/2]$, $r\in(0,1)$ and $K\in(0,\infty)$, for more results see \cite{AVV1988,AVVb,AQVV,QVV1997,QV1996,WQCJ2012,QVV1999,P1991,P1993,WZC2007,WZCQ2008,WCS2015,WCZ2019,WCC2021,WLC2018}.

One of the meaningful tasks is to investigate the submultiplicative and power submultiplicative properties of Hersch-Pfluger distortion function. Following this topic, Anderson, Vamanamurthy and Vuorinen \cite{AVVb,AVV1988} proved that
\begin{align}\label{In1}
\frac{\va_K(r)\va_K(t)}{\va_K(rt) }\leq 4^{1-\frac1K} \quad \text{and} \quad
\frac{\va_K(r)^p}{\va_K(r^p)}<4^{(p-1)\left(1-\frac1K\right)}
\end{align}
hold for all $K\in(1,\infty)$, $p\in(1,\infty)$ and $r,t\in(0,1)$. Recently, Wang, Qiu and Chu \cite{WQC2018} substantially improved the above inequalities (\ref{In1}) and established some sharp inequalities concerning the submultiplicative and power submultiplicative properties for Hersch-Pfluger distortion function. They proved the following Theorems \ref{th1} and \ref{th2}. To do this, we introduce generalized H\"ubner function (cf. \cite{AQVV,WQCJ2012,QV1999})
\begin{align*}
m_a(r)=\frac{2}{\pi \sin(\pi a)}r'^2 F(a,1-a;1;r^2) F(a,1-a;1;1-r^2)
\end{align*}
for $a\in(0,1/2]$ and $r\in(0,1)$.
When $a=1/2$, the generalized H\"ubner function $m_a(r)$ degenerates into the famous H\"ubner function $m(r)\equiv m_{1/2}(r)$ (cf. \cite{MQJ2021,QVV1999,Hubner}).

\begin{theorem}\label{th1}
Let $a(r,t)$, $b(r,t)$ and $c(r,t)$ be real functions defined on $(0,1)\times(0,1)$. Then the following statements are true:

$(1)$ The inequality
\begin{align}
\frac{\va_K(r)\va_K(t)}{\va_K(rt)}<{\rm e}^{a(r,t)\left(1-\frac1K\right)}
\end{align}
holds for all $r,t\in(0,1)$ and $K\in[1,\infty)$ if and only if $a(r,t)\geq m(r)+m(t)-m(rt)$.

$(2)$ The inequality
\begin{align}
{\rm e}^{c(r,t)(1-K)}<\frac{\va_{1/K}(r)\va_{1/K}(t)}{\va_{1/K}(rt)}
<{\rm e}^{b(r,t)(1-K)}
\end{align}
holds for all $r,t\in(0,1)$ and $K\in(1,\infty)$ if and only if $b(r,t)\leq m(r)+m(t)-m(rt)$ and $c(r,t)\geq \mu(r)+\mu(t)-\mu(rt)$.
\end{theorem}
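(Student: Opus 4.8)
The plan is to reduce everything to monotonicity and limiting behavior of a single auxiliary function of $K$. Fix $r,t\in(0,1)$ and consider
\[
G_{r,t}(K)=\log\va_{1/K}(r)+\log\va_{1/K}(t)-\log\va_{1/K}(rt),\qquad K\in[1,\infty).
\]
For part (2), since $K>1$ means $1/K<1$, I would first establish that $G_{r,t}$ is strictly monotone in $K$; the expected sign is that $K\mapsto G_{r,t}(K)/(1-K)$ lies strictly between the two boundary values obtained as $K\to 1^+$ and $K\to\infty$. The key identity driving this is the classical derivative formula $r\,d\va_K^a(r)/dr = \tfrac{1}{K}\,\va_K^a(r)\,(1-\va_K^a(r)^2)\,\K_a(\va_K^a(r))^2/(r'^2\,\K_a(r)^2)$ specialized to $a=1/2$, which lets one compute $\partial_K\log\va_{1/K}(r)$ in closed form and recognize it, after the substitution $u=\log(1/r)$ or via the variable $\mu(r)$, as essentially $-m(\cdot)$-type quantities; differentiating $G_{r,t}$ in $K$ and showing the resulting expression has constant sign is the analytic heart of the argument.

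Next I would pin down the two endpoints. As $K\to 1^+$, $\va_{1/K}(r)\to r$ and a first-order expansion $\log\va_{1/K}(r)=\log r+(1-K)\,m(r)+o(1-K)$ — which follows from the derivative formula above together with $m(r)=\tfrac{2}{\pi}r'^2\K(r')\K(r)$ and the relation $\mu(r)=\tfrac{\pi}{2}\K(r')/\K(r)$ — gives
\[
\lim_{K\to 1^+}\frac{G_{r,t}(K)}{1-K}=m(r)+m(t)-m(rt).
\]
As $K\to\infty$ (so $1/K\to 0^+$), $\va_{1/K}(r)\to 0$, and using the known asymptotics $\log\va_{1/K}(r)=-\mu(r)/K+\log 4+o(1)$ (equivalently $\mu(\va_{1/K}(r))=K\mu(r)$ and $\mu(s)=\log(4/s)+o(1)$ as $s\to0^+$) one gets $G_{r,t}(K)=\log 4-\tfrac{1}{K}(\mu(r)+\mu(t)-\mu(rt))+o(1)$, hence
\[
\lim_{K\to\infty}\frac{G_{r,t}(K)}{1-K}=\mu(r)+\mu(t)-\mu(rt).
\]
Combining strict monotonicity with these limits yields, for every $K\in(1,\infty)$,
\[
\mu(r)+\mu(t)-\mu(rt)\;<\;\frac{G_{r,t}(K)}{1-K}\;<\;m(r)+m(t)-m(rt),
\]
which is exactly the displayed two-sided inequality with the sharp choices $b=m(r)+m(t)-m(rt)$ and $c=\mu(r)+\mu(t)-\mu(rt)$. (The inequality $m(r)+m(t)-m(rt)>0>$ is not needed; one only needs the strict inclusion of the interval of values.)

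Finally, for the sharpness (the ``only if'' direction), I would argue that if the right inequality holds for all $K>1$ then dividing by $1-K<0$ and letting $K\to 1^+$ forces $b(r,t)\le m(r)+m(t)-m(rt)$; conversely any such $b$ works by the strict bound just proved. The same limiting argument at $K\to\infty$ forces $c(r,t)\ge \mu(r)+\mu(t)-\mu(rt)$, and conversely any larger $c$ works. The main obstacle I anticipate is the monotonicity step: proving that $\partial_K G_{r,t}(K)$ has a fixed sign requires showing that the function $s\mapsto m(s)$ (respectively the relevant combination) behaves monotonically along the flow $K\mapsto\va_{1/K}(r)$, which in turn reduces to a monotonicity property of $m$ or of $m\circ\mu^{-1}$ type composites; I would handle this with the monotone l'Hôpital rule applied to a suitably normalized quotient, and I expect to need the known fact that $m(r)$ is increasing (or that $m_a(r)/\mu_a(r)$-type ratios are monotone) together with the superadditivity-type inequality $m(rt)\le m(r)+m(t)$ and its $\mu$-analogue, which themselves come from the concavity/convexity of these functions in the variable $\log r$.
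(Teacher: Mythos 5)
Your overall strategy---reducing the theorem to the monotonicity and limiting behaviour of an auxiliary function of $K$---is the same as the paper's, which obtains this statement as the case $a=1/2$ of Theorem \ref{th3} via Theorem \ref{th5}. But as written the argument has concrete defects. First, part (1) of the theorem is never addressed: your $G_{r,t}$ and the quotient $G_{r,t}(K)/(1-K)$ only concern $\va_{1/K}$, and the case $\va_K$, $K\ge 1$, is genuinely different, because the relevant derivative quantity (the paper's $g_3$, specialized to $a=1/2$) is \emph{not} monotone on $(1,\infty)$: it decreases and then increases back to $0$, so identifying its supremum there requires separately knowing that its value at $K=1$, namely $m(r)+m(t)-m(rt)$, is positive (Proposition \ref{pro4}). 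Second, your central displayed sandwich is in the wrong order: one has $\mu(r)+\mu(t)-\mu(rt)>m(r)+m(t)-m(rt)$ (this is forced already by the requirement that the two sides of the inequality in part (2) be compatible, since $1-K<0$ there), so the chain you wrote is empty; the correct statement is $m(r)+m(t)-m(rt)<G_{r,t}(K)/(1-K)<\mu(r)+\mu(t)-\mu(rt)$, the quotient increasing from the $m$-value at $K\to1^+$ to the $\mu$-value at $K\to\infty$. Relatedly, your asymptotic $\log\va_{1/K}(r)=\log 4-\mu(r)/K+o(1)$ should read $\log 4-K\mu(r)+o(1)$ (since $\mu(\va_{1/K}(r))=K\mu(r)$); as written your expansion of $G_{r,t}$ would give limit $0$ rather than the $\mu$-combination you claim.

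The more serious issue is that the ``analytic heart'' is not supplied, and the reduction you sketch for it is not the right one. Showing that $\partial_K G_{r,t}$ has constant sign only gives monotonicity of $G_{r,t}$, which does not place $G_{r,t}(K)/(1-K)$ between its two endpoint limits; to apply the monotone l'H\^opital rule to $G_{r,t}(K)/(1-K)$ you need $\partial_K G_{r,t}$ itself to be monotone in $K$, a second-order statement. In the paper this is precisely the claim that $g_3$ is strictly decreasing on $(0,K_0)$ and increasing on $(K_0,\infty)$ for some $K_0>1$, and proving it consumes Propositions \ref{pro2}--\ref{pro3} together with the analysis of $g_4$ and $g_5$: one must show that $s'^2\K(s)^3$ and $\K(s)-\E(s)$ are suitably monotone along the flow $K\mapsto\va_K(r)$, and then determine the sign of $g_4$ at $K=1$ using the subadditivity of $m$. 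Your proposed reduction to ``monotonicity of $m$'' or ``concavity of $m$ and $\mu$ in $\log r$'' does not identify these quantities and there is no indication it yields the needed second-order monotonicity. The endpoint limit computations you give are correct in substance, but they are the easy part of the proof.
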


\begin{theorem}\label{th2}
Let $A(r)$, $B(r)$ and $C(r)$ be real functions defined on $(0,1)$. Then the following statements are true:

$(1)$ If $0<p\leq1$, then the inequalities
\begin{align*}
\frac{\va_K(r)^p}{\va_K(r^p)}>{\rm e}^{A(r)\left(1-\frac1K\right)}
\quad \text{and} \quad
{\rm e}^{B(r)(1-K)}<\frac{\va_{1/K}(r)^p}{\va_{1/K}(r^p)}<{\rm e}^{C(r)(1-K)}
\end{align*}
hold for all $r\in(0,1)$ and $K\in(1,\infty)$ if and only if $A(r)\leq pm(r)-m(r^p)$, $B(r)\geq pm(r)-m(r^p)$, $C(r)\leq p\mu(r)-\mu(r^p)$, respectively.

$(2)$ If $p>1$, then the inequalities
\begin{align*}
\frac{\va_K(r)^p}{\va_K(r^p)}<{\rm e}^{A(r)\left(1-\frac1K\right)}
\quad \text{and} \quad
{\rm e}^{B(r)(1-K)}<\frac{\va_{1/K}(r)^p}{\va_{1/K}(r^p)}<{\rm e}^{C(r)(1-K)}
\end{align*}
hold for all $r\in(0,1)$ and $K\in(1,\infty)$ if and only if $A(r)\geq pm(r)-m(r^p)$, $B(r)\geq p\mu(r)-\mu(r^p)$, $C(r)\leq pm(r)-m(r^p)$, respectively.
\end{theorem}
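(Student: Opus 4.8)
The plan is to fix $r\in(0,1)$ and to reduce all six inequalities to a single monotonicity statement about complete elliptic integrals. Abbreviate $\K\equiv\K_{1/2}$, $\E\equiv\E_{1/2}$, $\mu\equiv\mu_{1/2}$, $m\equiv m_{1/2}$. The basic tool is the identity
\[
\frac{d}{dK}\log\varphi_K(r)=\frac{m\bigl(\varphi_K(r)\bigr)}{K},
\]
obtained by differentiating $\mu(\varphi_K(r))=\mu(r)/K$ and inserting $\mu'(r)=-\pi^{2}/\bigl(4r\,r'^{2}\K(r)^{2}\bigr)$ and $m(r)=\tfrac{2}{\pi}r'^{2}\K(r)\K(r')$. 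Since $p\log r=\log r^{p}$, integrating this from $1$ to $K$ gives, with $\Phi(t):=t\bigl(pm(\varphi_t(r))-m(\varphi_t(r^{p}))\bigr)$,
\[
\log\frac{\varphi_K(r)^{p}}{\varphi_K(r^{p})}-\bigl(pm(r)-m(r^{p})\bigr)\Bigl(1-\tfrac1K\Bigr)=\int_{1}^{K}\frac{\Phi(t)-\Phi(1)}{t^{2}}\,dt
\]
for every $K>0$. An elementary asymptotic analysis --- using $\varphi_t(r)\sim 4e^{-\mu(r)/t}$ as $t\to0^{+}$ and the identity (\ref{va+va'}) as $t\to+\infty$ --- gives $\Phi(1)=pm(r)-m(r^{p})$, $\Phi(0^{+})=p\mu(r)-\mu(r^{p})$, $\Phi(+\infty)=0$, and $\Phi'(t)=pG(\varphi_t(r))-G(\varphi_t(r^{p}))$ where $G(s):=m(s)\bigl(1+sm'(s)\bigr)$. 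Computing $m'$ with Legendre's relation $\E(s)\K(s')+\E(s')\K(s)-\K(s)\K(s')=\pi/2$ yields $1+sm'(s)=2-\tfrac{4}{\pi}\E(s')\K(s)\le 0$, whence $G\le 0$ and $|G(s)|=\tfrac{8}{\pi^{2}}\,s'^{2}\K(s)\K(s')^{2}\bigl(\K(s)-\E(s)\bigr)$.

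The whole proof then runs through the shape of $\Phi$ on $(0,1)$ and on $(1,\infty)$. By the integral identity, the first inequality of part~(2) ($p>1$, $K>1$, sharp choice $A(r)=pm(r)-m(r^{p})$) is equivalent to $\int_{1}^{K}(\Phi(t)-\Phi(1))\,t^{-2}\,dt<0$ for all $K>1$, which holds once $\Phi(t)<\Phi(1)$ on $(1,\infty)$; and since $\Phi(1)>0=\Phi(+\infty)$ and $\Phi'(1)<0$ here, this in turn follows once $\Phi'$ changes sign at most once on $(1,\infty)$. After the substitution $L=1/K\in(0,1)$ and division by the negative factor $1-1/L$, the two $\varphi_{1/K}$-inequalities become $C(r)<H(L)<B(r)$ on $(0,1)$, where $H(L):=(1-1/L)^{-1}\log\bigl(\varphi_L(r)^{p}/\varphi_L(r^{p})\bigr)$ is, by the same identity, a $t^{-2}\,dt$-weighted average of $\Phi$ over $[L,1]$; these hold once $\Phi$ is monotone on $(0,1)$, the endpoint values above then placing $\sup_{(0,1)}H$ and $\inf_{(0,1)}H$ exactly at the points required by the stated bounds on $B$ and $C$. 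Thus everything reduces to controlling the sign of $\Phi'$. Writing $u=\varphi_t(r)$, $v=\varphi_t(r^{p})$ one has $\Phi'(t)=|G(u)|\bigl(|G(v)|/|G(u)|-p\bigr)$, and a further differentiation using the derivative identity gives $\frac{d}{dt}\log\bigl(|G(v)|/|G(u)|\bigr)=\tfrac1t\bigl[\Psi(v)-\Psi(u)\bigr]$, where $\Psi(s):=s\,m(s)\,\frac{d}{ds}\log|G(s)|$; explicitly,
\[
\Psi(s)=1-\frac{6}{\pi}\K(s)\E(s')+\frac{2s^{2}}{\pi}\cdot\frac{\K(s)^{2}\K(s')}{\K(s)-\E(s)},
\]
as one sees from $\frac{d}{ds}\K(s)=(\E(s)-s'^{2}\K(s))/(s\,s'^{2})$, $\frac{d}{ds}\E(s)=(\E(s)-\K(s))/s$ and Legendre's relation. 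Hence, if $\Psi$ is strictly decreasing on $(0,1)$, then --- since $u>v$ exactly when $p>1$ and $u<v$ when $p<1$ --- the ratio $|G(\varphi_t(r^{p}))|/|G(\varphi_t(r))|$ is strictly monotone in $t$; combined with its known endpoint values this fixes the sign of $\Phi'$ on each of $(0,1)$ and $(1,\infty)$, so $\Phi$ has the behaviour demanded above, and all six sharp inequalities follow.

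The decisive --- and, I expect, genuinely technical --- step is the claim that $\Psi$ is strictly decreasing on $(0,1)$; it is at any rate consistent, since $\Psi(0^{+})=+\infty$ and $\Psi(1^{-})=-\infty$. I would prove $\Psi'(s)<0$ by clearing denominators, writing $\Psi'$ as a rational expression in $\K(s),\E(s),\K(s'),\E(s')$ with the help of the derivative formulas above and $\frac{d}{ds}(\K(s)-\E(s))=s\E(s)/s'^{2}$, and then reducing it, using Legendre's relation and standard monotonicity facts for elliptic integrals (for instance $\E(s)/\K(s)$ decreasing, $\K(s)-\E(s)$ increasing, $\E(s')\K(s)\ge\pi/2$, $\K(s)-\E(s)<s^{2}\K(s)$), to a manifestly negative quantity; should a clean factorization not present itself, one more application of the monotone form of l'Hôpital's rule ought to finish it.

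Finally, the necessity half of each ``if and only if'' is immediate from the integral identity: letting $K$ approach the endpoint at which the binding extremum of $\Phi$ sits --- $K\to1^{+}$ for the first inequality, $K\to+\infty$ for the $\varphi_{1/K}$-inequalities --- forces $A(r),B(r),C(r)$ to satisfy precisely the displayed bounds, and the strictness of the original inequalities survives because those extrema of $\Phi$ are attained only in the open limit. The case $0<p\le1$ is entirely parallel, with the roles of $\sup$ and $\inf$, and hence all the monotonicities, reversed.
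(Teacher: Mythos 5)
Your proposal follows essentially the same route as the paper's proof of the generalized version of this statement (Theorem~\ref{th6}, specialized to $a=1/2$). Your $\Phi(t)=t\bigl(pm(\va_t(r))-m(\va_t(r^p))\bigr)$ is exactly the paper's $g_8$ (since $\tfrac{4}{\pi^2}\mu(r)s'^2\K(s)^2=K\,m(s)$ with $s=\va_K(r)$), your three limiting values of $\Phi$ are the paper's limits of $g_8$, and your factorization $\Phi'(t)=|G(v)|-p|G(u)|$ with $|G(s)|=\tfrac{8}{\pi^2}s'^2\K(s)\K(s')^2(\K(s)-\E(s))$ is, after substituting $\K(s')=\tfrac{2}{\pi}\mu(s)\K(s)$, precisely the identity (\ref{eq5})--(\ref{eq3}) relating $\partial g_8/\partial K$ to $g_9$. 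The reduction of all six equivalences to ``$\Phi'$ changes sign exactly once, from $-$ to $+$ when $p>1$ and from $+$ to $-$ when $0<p<1$,'' together with the limiting arguments at $K\to1$ and $K\to\infty$ for the necessity half, is sound.

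The genuine gap is the step you yourself flag as decisive: the claim that $\Psi(s)=1-\tfrac{6}{\pi}\K(s)\E(s')+\tfrac{2s^2}{\pi}\K(s)^2\K(s')/(\K(s)-\E(s))$ is strictly decreasing on $(0,1)$ is supported only by a plan of attack (``clear denominators\dots\ l'H\^opital ought to finish it''), not a proof, and this is where the real technical content of the theorem lives. The claim is true: using Legendre's relation one checks that $\Psi=-\tfrac{4}{\pi}f_9+\tfrac{2}{\pi}f_{11}-2$, where $f_9$ and $f_{11}$ are the auxiliary functions of Proposition~\ref{pro3} (at $a=1/2$), and the paper proves the stronger pair of facts that $f_9$ is increasing and $f_{11}$ is decreasing, each by an explicit Maclaurin expansion of a hypergeometric combination ($f_{10}$, and $f_2,f_3$ of Proposition~\ref{pro2}) plus product-of-monotone-functions arguments; some such computation is unavoidable, and without it your argument is a correct skeleton rather than a proof. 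Two smaller omissions of the same character: you use without proof that $pm(r)-m(r^p)$ and $p\mu(r)-\mu(r^p)$ are both positive for $p>1$ and both negative for $0<p<1$ (the paper's Proposition~\ref{pro1}), which is what places the binding extrema of $\Phi$ at $t=1$ and $t=0^+$; and you invoke as ``known'' the endpoint limits $0$ and $+\infty$ of $|G(v)|/|G(u)|$ on $(0,\infty)$ (the range assertion of Proposition~\ref{pro3}), which is what forces the single sign change of $\Phi'$ to actually occur.
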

Based on the above mentioned, the second purpose of this paper is to extend Theorems \ref{th1} and \ref{th2} to the case of generalized Hersch-Pfluger distortion function $\varphi_K^a(r)$ (see Theorems \ref{th3}-\ref{th4}). With the obtained results, new inequalities concerning  submultiplicative and power submultiplicative properties of $\varphi_K^a(r)$ will be established.

The rest of paper is organized as follows. Section \ref{Sec2} mainly presents the infinite series expansion of generalized Gr\"otzsch ring function, and gives an affirmative answer to a conjecture proposed by Tian and Yang in \cite{TY2023}. In Section 3, we first prove the submultiplicative and power submultiplicative properties for generalized Hersch-Pfluger distortion function, and then derive several inequalities for it in terms of elementary functions.

\medskip
\section{Series expansion and asymptotic formula of $\mu_{a}(r)$}\label{Sec2}

Before establishing the series expansion of $\mu_{a}(r)$, we first prove the following Proposition \ref{lemma2}, which not only has independent meaning but also serves as a preparation lemma for Theorem \ref{iduar}.

%\begin{lemma}\label{lemma0}
%For $n\in \IN$ and $k\leq n$, we have
%\begin{align*}
%\sum_{i=0}^k \frac{(-1)^i \binom {k}{i}}{n+i+1-k}=B(k+1,n-k+1),
%\end{align*}
%where $\binom {k}{i}=\frac{k!}{i!(k-i)!}$ is combinatorial number.
%\end{lemma}
%\begin{proof}
%First of all, constructors the function $\varrho$ on $(0,1)$ by
%\begin{align*}
%\varrho(x)=\sum_{i=0}^k \frac{(-1)^i \binom {k}{i}}{n+i+1-k}x^{n+i+1-k}.
%\end{align*}
%Then by differentiation,
%\begin{align*}
%\varrho'(x)=\sum_{i=0}^k (-1)^i \binom {k}{i} x^{n+i-k}
%=x^{n-k} \sum_{i=0}^k  \binom {k}{i} (-x)^i =x^{n-k} (1-x)^k.
%\end{align*}
%On the simultaneous integration of both sides,
%\begin{align*}
%\int_0^1 \varrho'(x) dx&=\int_0^1 x^{n-k} (1-x)^k dx
%=\int_0^1 x^{(n-k+1)-1} (1-x)^{(k+1)-1} dx,
%\end{align*}
%from which it follows that $\varrho(x)=B(k+1,n-k+1)$. This completes the proof.
%\end{proof}

\begin{pro}\label{lemma2}
For arbitrarily given $a\in(0,1/2]$, and for $x\in(0,1)$, the coefficients of the Maclaurin series of the function $x\mapsto1/[\sqrt{1-x} F(a,1-a;1;x)]$ are all nonnegative.
\end{pro}

\begin{proof}
For each $a\in(0,1/2]$, since $x\mapsto 1/[\sqrt{1-x}F(a,1-a;1;x)]$ is analytic in a neighborhood of $x=0$, then there exists a sequence $\{\lambda_n\}$ independent of $x$ such that
\begin{equation}\label{eq2.1}
\frac{1}{\sqrt{1-x} \, F(a,1-a;1;x)}=\s \lambda_n x^n.
\end{equation}
Using (\ref{2F1}), the above equation can be further rewritten as
\begin{align}\label{2eq1}
\frac{1}{\sqrt{1-x}}=\s b_n x^n \s \lambda_n x^n=\s \sum_{k=0}^n \lambda_k b_{n-k} x^n
\end{align}
with
\begin{align}
b_n=\frac{(a)_n(1-a)_n}{(n!)^2}>0.
\end{align}

Let
\begin{align}\label{Wn}
	W_n=\frac{(1/2)_n}{n!}
\end{align}
be the Wallis ratio, then by \cite[2.1.6]{AS} one has
\begin{align}
\frac{1}{\sqrt{1-x}}=\s W_n x^n.
\end{align}
This, in conjunction with \eqref{2eq1}, gives that
\begin{align*}
\s W_n x^n= \s \sum_{k=0}^n \lambda_k b_{n-k} x^n.
\end{align*}
Equating the coefficients of $x^n$ in the above equation, we obtain that $\lambda_0=1$ and
\begin{align}\label{2eq2}
W_n = \sum_{k=0}^n \lambda_k b_{n-k}=\lambda_n b_0+ \sum_{k=0}^{n-1} \lambda_k b_{n-k}, \,\, n\geq1.
\end{align}
Noting that $b_0=1$, and using (\ref{2eq2}), one can derive
\begin{align}\label{an}
\lambda_n=W_n-\sum_{k=0}^{n-1} \lambda_k b_{n-k}, \,\, n\geq1.
\end{align}

Next, we prove that $\lambda_{n}>0$ for $n\in \mathbb{N}$ by mathematical induction. Firstly, from \eqref{Wn} and \eqref{an} it is not difficult to see that
\begin{align*}
 \lambda_1=a^2-a+\frac12>0 \quad \text{and} \quad
\lambda_2=\frac34a^4-\frac32a^3+\frac74a^2-a+\frac38>0
\end{align*}
for all $a\in(0,1/2]$. Secondly, since $W_{n}=(2n-1)/(2nW_{n-1})$ and by \eqref{an},
\begin{align}\label{an-1}
\lambda_{n-1}=W_{n-1}-\sum_{k=0}^{n-2} \lambda_k b_{n-k-1}.
\end{align}
Eliminating $W_{n-1}$ from (\ref{an}) and (\ref{an-1}) results in
\begin{align}\label{an(1)}
\lambda_n=\frac{2n-1}{2n} \lambda_{n-1}-\sum_{k=0}^{n-1} \lambda_k b_{n-k}+\frac{2n-1}{2n} \sum_{k=0}^{n-2} \lambda_k b_{n-k-1}.
\end{align}
By
\begin{align}
b_{n-k}=\frac{(n+a-k-1)(n-k-a)}{(n-k)^2}b_{n-k-1},
\end{align}
(\ref{an(1)}) can be rewritten as
\begin{align}\label{an(2)}
\lambda_n=\left( \frac{2n-1}{2n}-a(1-a) \right) \lambda_{n-1}+\sum_{k=0}^{n-2}
\frac{n^2+2a(a-1)n-k^2}{2n(n-k)^2} \lambda_k b_{n-k-1}.
\end{align}
Finally, it suffices to prove that $(2n-1)/(2n)-a(1-a)\geq 0$ and $n^2+2a(a-1)n-k^2\geq 0$ for $n\geq 2$, $a\in(0,1/2]$ and $0\leq k \leq n-2$. Indeed, for $n\geq 2$ and $a\in(0,1/2]$, it is easy to calculate that
\begin{align*}
 \frac{2n-1}{2n}-a(1-a)\geq \frac56-a(1-a)\geq \frac{7}{12}
\end{align*}
and
\begin{align*}
n^2+2a(a-1)n-k^2 \geq [4-2a(1-a)]n-4 >0, \quad 0\leq k\leq n-2.
\end{align*}
This completes the proof.
\end{proof}

\begin{theorem}\label{iduar}
For each $a\in(0,1/2]$, let $\{\lambda_n\}$ be the positive sequence defined by (\ref{an}), and
\begin{align*}
\theta_n=\frac{1}{2n}\sum_{k=0}^n \lambda_k \lambda_{n-k}, \quad n\geq 1.
\end{align*}
Then, for $r\in(0,1)$, we have
\begin{align*}
\mu_a(r)+\log r=\frac{R(a)}{2}-\su \theta_n r^{2n}.
\end{align*}
\end{theorem}

\begin{proof}
By differentiation (see \cite[Theorem 4.1(5)]{AQVV}),
\begin{align*}
\frac{d [\mu_a(r)+\log r]}{dr}
=\frac 1r \left[ 1- \left(\frac{1}{\sqrt{1-r^2}F\left(a,1-a;1; r^2\right)  }\right)^2 \right].
\end{align*}
Making use of \eqref{eq2.1} and $\lambda_{0}=1$ gives
\begin{align*}
\frac{d [\mu_a(r)+\log r]}{dr}
&=\frac{1}{r} \left[ 1-\left( \sum_{n=0}^{\infty} \lambda_n r^{2n}\right)^2\right]=\frac{1}{r} \left( 1-\sum_{n=0}^{\infty} \sum_{k=0}^n \lambda_k \lambda_{n-k} r^{2n} \right) \nonumber \\
&=- \sum_{n=1}^{\infty} \sum_{k=0}^n \lambda_k \lambda_{n-k} r^{2n-1}.
\end{align*}
Next integrating from $0$ to $r$, we find that
\begin{align*}
\int_0^r  \frac{d [\mu_a(x)+\log x]}{dx} dx=-\int_0^r \su \sum_{k=0}^n \lambda_k \lambda_{n-k} x^{2n-1} dx.
\end{align*}

Proposition \ref{lemma2} shows that $\left\{\sum_{k=0}^n \lambda_k \lambda_{n-k} x^{2n-1}\right\}_{n=1}^{\infty}$ is non-negative measurable function sequence on $x\in(0,1)$. Using Lebesgue theorem, we obtain
\begin{align*}
\int_0^r  \frac{d [\mu_a(x)+\log x]}{dx} dx = - \su \sum_{k=0}^n \lambda_k \lambda_{n-k} \int_0^r x^{2n-1} dx.
\end{align*}
So that
\begin{align*}
\mu_a(r)+\log r-\frac{R(a)}{2}=
- \su  \sum_{k=0}^n \frac{ \lambda_k \lambda_{n-k} } {2n} r^{2n}.
\end{align*}
Here we have used the limiting value $\lim_{r\rightarrow 0^+}\mu_{a}(r)+\log{r}=R(a)/2$ (see \cite[Theorem 5.5\,(2)]{AQVV}). This completes the proof.
\end{proof}

\begin{remark}
A function $f$ is called absolutely monotonic on an interval $I$
if it has nonnegative derivatives of all orders on $I$, that is,
$f^{\left( n\right) }\left( x\right) \geq 0$
for $x\in I$ and every $n\geq 0$ (see \cite{Widder-LT-1946}). In particular,  if $
f(x)=\sum_{n=0}^{\infty}a_{n}x^n$ is a power series converging on $(0,r)$ $(r>0)$, then $f(x)$ is
absolutely monotonic on $(0,r)$ if and only if $a_{n}\geq 0$ for all $n\geq 0$. Proposition \ref{lemma2} and Theorem \ref{iduar} imply that $r \mapsto {R(a)}/{2}-\mu_a(r)-\log r$ is absolutely monotonic on $(0,1)$. As a conclusion, $r\mapsto \mu_a(r)+\log r$ is strictly decreasing and concave on $(0,1)$.
\end{remark}

Employing Proposition \ref{lemma2}, a conjecture due to Tian and Yang in \cite{TY2023}, which states that $x \mapsto -[1/F\left(a,1-a;1; x\right)]'$ is absolutely monotonic on $(0,\infty)$, can be confirmed now.

%Using Proposition \ref{lemma2} and Theorem \ref{iduar}, we immediately arrive at the following corollary.
%\begin{corollary}
%The function
%\begin{align*}
%r \mapsto \frac{R(a)}{2}-\mu_a(r)-\log r
%\end{align*}
%is absolutely monotone in $r \in(0,1)$.
%\end{corollary}

\begin{theorem}\label{contheorem}
For each $a\in(0,1)$, the function $x \mapsto -[1/F\left(a,1-a;1; x\right)]'$ is absolutely monotonic on $(0,\infty)$.
\end{theorem}
\begin{proof}
Let
\begin{equation*}
f(x)=-\left[\frac{1}{F(a,1-a;1;x)}\right]',
\end{equation*}
then by simple computation one has
\begin{align}\label{eqf(x)}
f(x)=&\frac{a(1-a)F(a+1,2-a;2;x)}{[F(a,1-a;1;x)]^2}
=\frac{a(1-a)F(a,1-a;2;x)}{(1-x)[F(a,1-a;1;x)]^2} \nonumber\\
=&a(1-a)\left[\frac{1}{\sqrt{1-x}F(a,1-a;1;x)}\right]^2F(a,1-a;2;x).
\end{align}
	
Therefore, Theorem \ref{contheorem} directly follows from \eqref{eqf(x)} and Proposition \ref{lemma2}.
\end{proof}

In addition, it is worth pointing out that, since the values of $\{\theta_{n}\}$ in Theorem \ref{iduar} are computable,  then Theorem \ref{iduar} provides a algorithm to obtain high-precision estimates of $\mu_{a}(r)$. For example, use of (\ref{an}) gives
\begin{align*}
&\lambda_0=1, \quad \lambda_1=a^2-a+\frac12, \quad \lambda_2=\frac34a^4-\frac32a^3+\frac74a^2-a+\frac38, \\
&\lambda_3=\frac{19}{36}a^6-\frac{19}{12}a^5+\frac{197}{72}a^4-\frac{17}6a^3+\frac{19}9a^2-\frac{23}{24}a+\frac5{16}.
\end{align*}
Then by Theorem \ref{iduar}, we obtain
\begin{align*}
\theta_1=\lambda_0 \lambda_1=a^2-a+\frac12,
\end{align*}
\begin{align*}
\theta_2=\frac{\lambda_1^2+2 \lambda_0\lambda_2}{4}=\frac54a^4-\frac52a^3+\frac{11}4a^2-\frac32a+\frac12,
\end{align*}
\begin{align*}
\theta_3=\frac{\lambda_0 \lambda_3 + \lambda_1\lambda_2}{3}=\frac{23}{54}a^6-\frac{23}{18}a^5+\frac{229}{108}a^4-\frac{19}9a^3
+\frac{157}{108}a^2-\frac{11}{18}a+\frac16.
\end{align*}
Furthermore, we have
\begin{corollary}\label{thua(r)}
For each $a\in (0,1/2]$, let
\begin{align*}
\delta=\frac{R(a)}{2}-\sum_{k=1}^{3}\theta_{k}=\frac{R(a)}{2}- \left(\frac{23}{54}a^6-\frac{23}{18}a^5+\frac{91}{27}a^4-\frac{83}{18}a^3+\frac{281}{54}a^2-\frac{28}9a+\frac76\right).
\end{align*}
Then the function
\begin{align*}
H(r)=\frac{R(a)}{2}-\mu_a(r)-\log r-\theta_1 r^2-\theta_2 r^4-\theta_3 r^6
\end{align*}
is strictly increasing and convex from $(0,1)$ onto $(0,\delta)$. Consequently, for $a\in (0,1/2]$ and $r\in (0,1)$,
\begin{equation}\label{ua(r)ineq}
\sum_{k=1}^{3}\theta_{k}-\theta_1 r^2-\theta_2 r^4-\theta_3 r^6\leq \mu_a(r)+\log r \leq \frac{R(a)}{2}-\theta_1 r^2-\theta_2 r^4-\theta_3 r^6.
\end{equation}
The first (second) equality holds if and only if $r \rightarrow 1$ $(r \rightarrow 0, respectively)$.
\end{corollary}

%Applying Theorems \ref{iduar} to (\ref{Ramanujan1}) and (\ref{In5}), we obtain explicit lower bound estimate of $\varphi_{1/K}(a,r)$.

%\begin{corollary}
%For each $a\in(0,1/2]$, $r\in(0,1)$ and $p\in(1,\infty)$, the solution $s=\varphi_{1/p}(a,r)$ of Ramanujan's generalized modular equation
%\begin{align*}
%\frac{F(a,1-a;1;1-s^2)}{F(a,1-a;1;s^2)}=p \frac{F(a,1-a;1;1-r^2)}{F(a,1-a;1;r^2)} \end{align*}
%has following precise explicit lower bound estimate
%\begin{align*} s> r^p \exp \left\{ (1-p)
%\left(\frac{R(a)}{2}-\su\sum_{k=0}^n \frac{a_k a_{n-k}}{2n} r^{2n}\right)\right\}
%\end{align*}
%with series $\frac{R(a)}{2}-\su\sum_{k=0}^n \frac{a_k a_{n-k}}{2n} r^{2n}$ is best possible as function independent of $p$. \end{corollary}

\medskip

\section{Submultiplicative and power submultiplicative properties of generalized Hersch-Pfluger distortion function}
This section deals mostly with the inequalities involving the generalized Hersch-Pfluger distortion functions. To be more precise, we shall establish some sharp inequalities concerning the submultiplicative and power submultiplicative properties of $\varphi_{K}^{a}(r)$. Our main theorems are

\begin{theorem}\label{th3}
Let $\alpha(a,r,t)$, $\beta(a,r,t)$ and $\gamma(a,r,t)$ be  real functions defined on $(0,1/2]\times(0,1)\times(0,1)$. Then we have

$(1)$ The inequality
\begin{align}\label{In7}
\frac{\va_K^a(r)\va_K^a(t)}{\va_K^a(rt)}
<{\rm e}^{\alpha(a,r,t)\left(1-\frac1K\right)}
\end{align}
holds for all $(a,r,t)\in(0,1/2]\times(0,1)\times(0,1)$ and $K\in(1,\infty)$ if and only if $\alpha(a,r,t)\geq m_a(r)+m_a(t)-m_a(rt)$.

$(2)$ The inequality
\begin{align}
{\rm e}^{\gamma(a,r,t)(1-K)}
<\frac{\va_{1/K}^a(r)\va_{1/K}^a(t)}{\va_{1/K}^a(rt)}
<{\rm e}^{\beta(a,r,t)(1-K)}
\end{align}
holds for all $(a,r,t)\in(0,1/2]\times(0,1)\times(0,1)$ and $K\in(1,\infty)$ if and only if $\beta(a,r,t)\leq m_a(r)+m_a(t)-m_a(rt)$ and $\gamma(a,r,t)\geq \mu_a(r)+\mu_a(t)-\mu_a(rt)$.
\end{theorem}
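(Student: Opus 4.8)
The plan is to mimic the structure that underlies Theorems \ref{th1}--\ref{th2}, replacing $m(r),\mu(r),\varphi_K(r)$ by their $a$-analogues $m_a(r),\mu_a(r),\varphi_K^a(r)$ throughout, and to isolate the two genuinely new ingredients: (i) the monotonicity in $K$ of the relevant quotient, and (ii) the two limiting values of that quotient as $K\to 1^+$ and as $K\to\infty$ (resp. $K\to 0^+$). First I would fix $a\in(0,1/2]$, $r,t\in(0,1)$, and introduce
\begin{align*}
G(K)=\log\frac{\varphi_K^a(r)\varphi_K^a(t)}{\varphi_K^a(rt)},\qquad K\in(0,\infty),
\end{align*}
so that the claimed inequality (\ref{In7}) reads $G(K)/(1-1/K)<\alpha(a,r,t)$ for $K>1$, and the two-sided inequality in part (2) reads $\gamma(a,r,t)<G(K)/(K-1)<\beta(a,r,t)$ for $K>1$ after replacing $K$ by $1/K$. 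The key structural fact I would establish is that $K\mapsto G(K)/(1-1/K)$ is \emph{monotone} on $(1,\infty)$ (and likewise on $(0,1)$ after the substitution), so that its supremum/infimum is attained only in the limits $K\to 1^+$ and $K\to\infty$. For this I would differentiate $\varphi_K^a(r)$ in $K$: from $\mu_a(\varphi_K^a(r))=\mu_a(r)/K$ and the known derivative formula
\begin{align*}
\frac{d}{dr}\mu_a(r)=-\frac{\pi^2}{4\sin(\pi a)\,\mathscr{K}_a(r)^2\,r\,r'^2},
\end{align*}
together with the identity defining $m_a$, one gets a clean expression
\begin{align*}
\frac{\partial}{\partial K}\log\varphi_K^a(r)=\frac{\mu_a(r)}{K^2}\cdot\frac{m_a(\varphi_K^a(r))}{\text{(something)}},
\end{align*}
and more usefully $K^2\,\partial_K\log\varphi_K^a(r)=m_a(\varphi_K^a(r))$, the exact $a$-analogue of the classical fact $K^2\,\partial_K\log\varphi_K(r)=m(\varphi_K(r))$. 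This reduces $G'(K)$ to a combination of $m_a$ evaluated at $\varphi_K^a(r),\varphi_K^a(t),\varphi_K^a(rt)$, and the monotonicity of $G(K)/(1-1/K)$ then follows from a l'Hôpital-type monotone-ratio argument (the ``monotone l'Hôpital rule'' of Anderson--Vamanamurthy--Vuorinen) once one checks the sign of the appropriate auxiliary quotient — this is where one needs properties of $m_a$, in particular its monotonicity and the behaviour of $r\mapsto m_a(\varphi_K^a(r))$.

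Next I would compute the two endpoint limits. As $K\to 1^+$ one has $\varphi_K^a(r)\to r$ and $G(K)\to 0$, so by the derivative computation above, $\lim_{K\to1^+}G(K)/(1-1/K)=\lim_{K\to1^+}K^2 G'(K)=m_a(r)+m_a(t)-m_a(rt)$. As $K\to\infty$ (for part (1)) one has $\varphi_K^a(r)\to 1$, and using the known asymptotics $m_a(s)=\log(1/s')+o(1)$-type expansions near $s=1$, or directly the relation $\varphi_K^a(r)'\sim (\text{const})\,\varphi_{1/K}^a$ via (\ref{va+va'}), the quotient $G(K)/(1-1/K)$ tends to the same value $m_a(r)+m_a(t)-m_a(rt)$ — i.e. the supremum over $K\in(1,\infty)$ equals $m_a(r)+m_a(t)-m_a(rt)$, and it is not attained, which gives the strict inequality and the sharpness of the constant. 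For part (2), after substituting $K\mapsto 1/K$, the relevant function is $H(K)=G(1/K)/(K-1)$ on $(1,\infty)$; its limit as $K\to1^+$ is again $m_a(r)+m_a(t)-m_a(rt)$ (upper bound), while as $K\to\infty$, $\varphi_{1/K}^a(r)\to 0$ and an analysis using $m_a(s)=\mu_a(s)+\log s+o(1)$ as $s\to0^+$ (the $a$-analogue of $m(r)+\log r$ being bounded) shows the limit is $\mu_a(r)+\mu_a(t)-\mu_a(rt)$ (lower bound). Then $H$ being monotone between these two limits yields the two-sided bound with both constants sharp.

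The main obstacle I anticipate is twofold. First, establishing the monotonicity of $G(K)/(1-1/K)$ rigorously requires controlling the sign of a ratio of sums of $m_a$-values at the three points $\varphi_K^a(r),\varphi_K^a(t),\varphi_K^a(rt)$; the classical $a=1/2$ argument in \cite{WQC2018} uses specific convexity/monotonicity properties of $m$ (e.g. that $m(r)$ and $m(e^{-x})$ have definite convexity, or that $\log\varphi_K(r)$ is concave in $\log r$), and one must verify the corresponding properties of $m_a$ — these should follow from the differentiation formulas for $\mathscr{K}_a,\mathscr{E}_a$ and from known monotonicity results for $\mu_a$ and $m_a$ in the literature (\cite{AQVV,WQCJ2012,QV1999}), but the verification is the delicate step. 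Second, the endpoint asymptotics of $m_a$ near $0$ and near $1$ must be pinned down with enough precision to identify the limits exactly; here the relevant facts are $\lim_{r\to0^+}[m_a(r)+\log r]$ finite and the precise logarithmic singularity of $\mathscr{K}_a$ at $1$, both available from \cite{AQVV,AVVb}. Once these two technical inputs are in hand, the ``if and only if'' follows formally: sufficiency of the stated bounds on $\alpha,\beta,\gamma$ is immediate from the sup/inf being the stated expressions, and necessity follows by letting $K\to1^+$ (to force $\alpha,\beta\ge$ or $\le$ the $m_a$-combination) and $K\to\infty$ (to force $\gamma\ge$ the $\mu_a$-combination), exactly as in the proof of Theorems \ref{th1}--\ref{th2}.
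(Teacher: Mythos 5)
Your overall scheme (differentiate the logarithm of the quotient in $K$, identify the limiting values at $K\to0^+$, $K\to1$ and $K\to\infty$, and read off the sharp constants, with necessity obtained by letting $K\to1$) is the same as the paper's, and you correctly identify $m_a(r)+m_a(t)-m_a(rt)$ and $\mu_a(r)+\mu_a(t)-\mu_a(rt)$ as the $K\to1$ and $K\to0^+$ limits of the relevant derivative expression. But the central step of your plan does not go through as stated. Writing $G(K)=\log\bigl[\varphi_K^a(r)\varphi_K^a(t)/\varphi_K^a(rt)\bigr]$ and $g_3(K)=K^2G'(K)$, the monotone l'H\^{o}pital rule would give monotonicity of $G(K)/(1-1/K)$ only if $g_3(K)=G'(K)\big/\tfrac{d}{dK}(1-1/K)$ were itself monotone on $(1,\infty)$ --- and it is not. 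The paper proves (via Propositions \ref{pro2} and \ref{pro3}, i.e.\ the monotonicity of the ratios $f_7,f_8$ and the sign analysis of the auxiliary quantities $g_4$ and $g_5$) that $g_3$ is strictly decreasing on $(0,K_0)$ and strictly increasing on $(K_0,\infty)$ for some $K_0\in(1,\infty)$, with $g_3(0^+)=\mu_a(r)+\mu_a(t)-\mu_a(rt)$, $g_3(1)=m_a(r)+m_a(t)-m_a(rt)>0$ and $g_3(\infty)=0$. It is precisely this piecewise monotonicity, combined with the positivity of the $K=1$ value (Proposition \ref{pro4}), that locates $\sup_{K\ge1}g_3$ at $K=1$ and $\sup_{0<K\le1}g_3$ at $K\to0^+$; this is the real content of the argument, and your plan offers no substitute for it. You do flag the ``delicate step,'' but the tool you propose for it cannot close the gap.

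Two further slips. First, the derivative identity should read $K\,\partial_K\log\varphi_K^a(r)=m_a(\varphi_K^a(r))$, not $K^2\,\partial_K\log\varphi_K^a(r)=m_a(\varphi_K^a(r))$; equivalently $K^2\,\partial_K\log s=\tfrac{4}{\pi^2}s'^2\mathscr{K}_a(s)^2\mu_a(r)=K\,m_a(s)$ with $s=\varphi_K^a(r)$. Second, your claimed limit $\lim_{K\to\infty}G(K)/(1-1/K)=m_a(r)+m_a(t)-m_a(rt)$ is false: since $\varphi_K^a(r)\to1$ for each fixed $r$ as $K\to\infty$, one has $G(K)\to0$ and the quotient tends to $0$, not to the strictly positive quantity $m_a(r)+m_a(t)-m_a(rt)$. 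This particular error does not destroy the final conclusion (the supremum is still attained in the limit $K\to1^+$), but it shows the endpoint analysis in your plan is not yet under control; the analogous computation for the lower bound in part (2), where the $\mu_a$-combination does appear as the $K\to\infty$ limit after the substitution $K\mapsto1/K$, needs the asymptotics $\mu_a(s)+\log s\to R(a)/2$ carried out with the corrected derivative identity.
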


\begin{theorem}\label{th4}
Let $\delta(a,r)$, $\zeta(a,r)$ and $\eta(a,r)$ be real functions defined on $(0,1/2]\times(0,1)$. Then we have

$(1)$ If $0<p<1$, then the inequalities
\begin{equation}\label{th4eq1}
\frac{\va_K^a(r)^p}{\va_K^a(r^p)}>{\rm e}^{\delta(a,r)\left(1-\frac1K\right)}
\quad \text{and} \quad
{\rm e}^{\zeta(a,r)(1-K)}
<\frac{\va_{1/K}^a(r)^p}{\va_{1/K}^a(r^p)}
<{\rm e}^{\eta(a,r)(1-K)}
\end{equation}
hold for all  $(a,r)\in(0,1/2]\times(0,1)$ and $K\in(1,\infty)$ if and only if $\delta(a,r)\leq pm_a(r)-m_a(r^p)$, $\zeta(a,r)\geq pm_a(r)-m_a(r^p)$, $\eta(a,r)\leq p\mu_a(r)-\mu_a(r^p)$, respectively.

$(2)$ If $p>1$, then the inequalities
\begin{equation}\label{th4eq2}
\frac{\va_K^a(r)^p}{\va_K^a(r^p)}<{\rm e}^{\delta(a,r)\left(1-\frac1K\right)}
\quad \text{and} \quad
{\rm e}^{\zeta(a,r)(1-K)}
<\frac{\va_{1/K}^a(r)^p}{\va_{1/K}^a(r^p)}
<{\rm e}^{\eta(a,r)(1-K)}
\end{equation}
hold for all  $(a,r)\in(0,1/2]\times(0,1)$ and $K\in(1,\infty)$ if and only if $\delta(a,r)\geq pm_a(r)-m_a(r^p)$, $\zeta(a,r)\geq p\mu_a(r)-\mu_a(r^p)$, $\eta(a,r)\leq pm_a(r)-m_a(r^p)$, respectively.
\end{theorem}

Before proving our main results in this section, let us introduce the Borwein's generalized elliptic integrals and prove several propositions at first: For $a\in(0,1)$ and $r\in(0,1)$, the generalized elliptic integrals of the first and second kind are defined as
\begin{equation}
\label{Ka}
\K_a=\K_a(r)=\frac{\pi}{2}\, F\left(a,1-a;1;r^2\right),
\K_a(0)=\frac{\pi}{2}, \K_a(1^-)=\infty,
\end{equation}
and
\begin{equation}\label{Ea}
\E_a=\E_a(r)=\frac{\pi}{2} \, F\left(a-1,1-a;1;r^2\right),\\
\E_a(0)=\frac{\pi}{2},
\E_a(1)=\frac{\sin(\pi a)}{2(1-a)},
\end{equation}
respectively (cf. \cite{AVVb,AQVV,V2009}).
The particular cases $\K\equiv\K_{1/2}$ and $\E\equiv\E_{1/2}$ are the well-known complete elliptic integrals of the first and second kinds, respectively (cf. \cite[17.3.9-17.3.10]{AS}). Following the above notation, the generalized Gr\"otzsch ring function and generalized H\"ubner function can be expressed by
\begin{align}
	\mu_a(r)&=\frac{\pi}{2\sin(\pi a)}\frac{ \K_a(r') }{ \K_a(r) },\label{eqmua}\\
	m_a(r)&=\frac{2}{\pi \sin(\pi a)}r'^2 \K_a(r') \K_a(r).\label{eqma}
\end{align}
By the symmetry of $a$ in \eqref{Ka}, \eqref{eqmua} and \eqref{eqma}, we can assume that $a\in(0,1/2]$ in the sequel. The following derivative formulas will be used frequently: For $a\in(0,1/2]$, $r\in(0,1)$ and $K\in(0,\infty)$, denote by $s=\va_K^a(r)$,
\begin{equation*}
\frac{d \K_a}{dr}=\frac{2(1-a)(\E_a-r'^2\K_a)}{rr'^2},\quad
\frac{d \E_a}{dr}=\frac{2(a-1)(\K_a-\E_a)}{r},
\end{equation*}
\begin{equation*}
\frac{d\mu_a(r)}{dr}=-\frac{\pi^2}{4rr'^2\K_a(r)^2},
\end{equation*}
\begin{equation*}
\frac{dm_a(r)}{dr}
=\frac{1}{r}+\frac{4\K_a(r)}{\pi r\sin(\pi a)}
\left[(1-2a)r^2\K_a(r')-2(1-a)\E_a(r')\right],
\end{equation*}
\begin{equation*}
\frac{\partial s}{\partial K}
=\frac{4ss'^2\K_a(s)^2}{\pi^2}\frac{\mu_a(r)}{K^2}
=\frac{4ss'^2\K_a(s)^2}{\pi^2}\frac{\mu_a(s)}{K}.
\end{equation*}

The proof of Theorems \ref{th3} and \ref{th4} requires the following preliminary results.

%%Proposition 3.3
\begin{pro}\label{pro2} Let $a\in(0,1/2]$. Then

$(1)$ the function
\begin{align*}
f_1(r)\equiv \frac{2a-1}{1-a}r'^2\K_a(r)+2\E_a(r)
\end{align*}
is strictly decreasing from $(0,1)$ onto $\left(\sin(\pi a)/(1-a),{\pi}/[2(1-a)]\right)$.

$(2)$ the function
\begin{align*}
f_2(r)\equiv \frac{a(2a-1)}{1-a}r'^2\K_a(r)+\E_a(r)
\end{align*}
is strictly decreasing from $(0,1)$ onto $\left(\sin(\pi a)/[2(1-a)],\pi(2a^2-2a+1)/[2(1-a)]\right)$.

$(3)$ the function
\begin{align*}
f_3(r)\equiv \frac{r^2\K_a(r)}{\left(1+\frac{2a-1}
{2(1-a)}r^2\right)\K_a(r)-\E_a(r)}
\end{align*}
is strictly decreasing from $(0,1)$ onto $\left(2(1-a),{2(1-a)}/(2a^2-2a+1)\right)$.

%(4) For each $a\in(0,1/2]$, the function
%\begin{align*}
%f_4(r)\equiv 2(1-a)\E_a(r)-(1-2a)r'^2\K_a(r)
%\end{align*}
%is strictly decreasing from $(0,1)$ onto $(\sin(\pi a),\pi/2)$.
\end{pro}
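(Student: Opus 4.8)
The plan is to treat all three parts by one scheme: differentiate $f_i$, determine the sign of $f_i'$, and compute the one-sided limits at $r=0^+$ and $r=1^-$ from $\K_a(0)=\E_a(0)=\pi/2$, $\E_a(1)=\sin(\pi a)/[2(1-a)]$, and the standard logarithmic asymptotics of $\K_a$ at $r=1$, which give $\lim_{r\to1^-}r'^2\K_a(r)=0$. Differentiating $r'^2\K_a$ via the listed formula for $d\K_a/dr$ yields $(r'^2\K_a)'=-2r\K_a+2(1-a)(\E_a-r'^2\K_a)/r$, and combining with $d\E_a/dr$ produces the clean identity $\frac{d}{dr}\big(\E_a-r'^2\K_a\big)=2ar\K_a$. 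Since $\E_a-r'^2\K_a$ vanishes at $r=0$, integrating gives $\E_a(r)-r'^2\K_a(r)=2a\int_0^r t\,\K_a(t)\,dt$, and because $\K_a$ is increasing this yields the key estimate $0<\E_a(r)-r'^2\K_a(r)<a\,r^2\K_a(r)$ on $(0,1)$, which will drive parts (1) and (2).

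For (1) and (2) I would substitute the derivative formulas, simplify, and use the algebraic identities $\K_a+\frac{a(2a-1)r^2}{1-a}\K_a=r'^2\K_a+\frac{(2a^2-2a+1)r^2}{1-a}\K_a$ and its analogue with $3a^2-3a+1$ to reach $f_1'(r)=\frac{2}{r}\big[(\E_a-r'^2\K_a)-\frac{(2a^2-2a+1)r^2}{1-a}\K_a\big]$ and $f_2'(r)=\frac{2}{r}\big[(2a^2-2a+1)(\E_a-r'^2\K_a)-\frac{(3a^2-3a+1)r^2}{1-a}\K_a\big]$. Feeding in $\E_a-r'^2\K_a<a\,r^2\K_a$, it then remains only to verify the elementary inequalities $a(1-a)\le 2a^2-2a+1$ (equivalently $3a^2-3a+1\ge0$, true since the discriminant is negative) and $a(1-a)(2a^2-2a+1)\le 3a^2-3a+1$ (equivalently, with $u:=a(1-a)\in(0,1/4]$, $2u^2-4u+1\ge0$, which holds on $(0,1/4]$). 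This gives $f_1'<0$ and $f_2'<0$, and plugging $r\to0^+$ and $r\to1^-$ into the defining formulas produces exactly the asserted ranges.

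For (3) I would first rewrite $1+\frac{2a-1}{2(1-a)}r^2=r'^2+\frac{r^2}{2(1-a)}$, so that $f_3(r)=\dfrac{r^2\K_a}{\frac{r^2\K_a}{2(1-a)}-(\E_a-r'^2\K_a)}$, the denominator being positive because $\E_a-r'^2\K_a<a\,r^2\K_a<\frac{r^2\K_a}{2(1-a)}$ (using $a<\frac1{2(1-a)}$). Hence $\frac1{f_3(r)}=\frac1{2(1-a)}-g(r)$ with $g(r):=\dfrac{\E_a(r)-r'^2\K_a(r)}{r^2\K_a(r)}$, so the whole statement reduces to showing $g$ is strictly decreasing from $(0,1)$ onto $(0,a)$. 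For the monotonicity I would apply the monotone form of l'Hôpital's rule to $g=P/N$ with $P=\E_a-r'^2\K_a$, $N=r^2\K_a$ (both vanishing at $0$, $N'>0$): one computes $P'/N'=\dfrac{a\,r'^2\K_a}{a\,r'^2\K_a+(1-a)\E_a}=\dfrac{a}{a+(1-a)\,\E_a/(r'^2\K_a)}$, so it suffices to prove $\E_a/(r'^2\K_a)$ is strictly increasing. Differentiating that, the sign of its derivative reduces to the inequality $q(z):=(1-a)z^2-\big(2(1-a)r'^2+r^2\big)z+(1-a)r'^2<0$ at $z=\E_a/\K_a$; here I would observe that $q(r'^2)=-a\,r^2r'^2<0$ and $q(1)=-a\,r^2<0$, so the negativity set of the upward parabola $q$, being an interval, contains both $r'^2$ and $1$ and hence the whole of $[r'^2,1]$, while $r'^2\K_a<\E_a<\K_a$ puts $z=\E_a/\K_a$ inside $(r'^2,1)$, so $q(z)<0$. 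Finally $g(0^+)=a$ (by l'Hôpital, or from $\E_a-r'^2\K_a\sim\frac{\pi a}{2}r^2$, $r^2\K_a\sim\frac{\pi}{2}r^2$) and $g(1^-)=0$ (since $\E_a-r'^2\K_a\to\E_a(1)$ is finite while $r^2\K_a\to\infty$), yielding the range for $f_3$.

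The step I expect to be the main obstacle is the monotonicity of $\E_a/(r'^2\K_a)$ in part (3): there the crude bound $\E_a-r'^2\K_a<a\,r^2\K_a$ no longer suffices, and one has to exploit the two-sided evaluation of $q$ at $z=r'^2$ and $z=1$ in tandem with the sharp enclosure $r'^2\K_a<\E_a<\K_a$. The rest is routine differentiation and endpoint bookkeeping.
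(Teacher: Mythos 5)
Your proposal is correct, and I checked the computations: the identity $\frac{d}{dr}\bigl(\E_a-r'^2\K_a\bigr)=2ar\K_a(r)$ follows from the listed derivative formulas, the resulting bound $0<\E_a-r'^2\K_a<ar^2\K_a$ is valid, your expressions for $f_1'$ and $f_2'$ simplify exactly as stated, the two elementary inequalities in $a$ (equivalently $3a^2-3a+1>0$ and $2u^2-4u+1>0$ for $u=a(1-a)\le 1/4$) hold, and in part (3) the quadratic $q$ is indeed negative at both $z=r'^2$ and $z=1$ while $r'^2\K_a<\E_a<\K_a$ places $\E_a/\K_a$ between them. However, your route is genuinely different from the paper's. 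The paper proves (1) by writing out the Maclaurin expansion of $f_1$ and observing that all coefficients of $r^{2n}$ ($n\ge1$) are negative (the same quantity $3a^2-3a+1$ governs positivity of those coefficients), obtains (2) from the decomposition $f_2=af_1+(1-2a)\E_a$, and proves (3) by expanding numerator and denominator as power series and invoking the Ponnusamy--Vuorinen lemma that a ratio of power series is monotone when the ratio of coefficients is; the endpoint values are then read off from the series and from the known asymptotics of $\K_a$ at $r=1$. Your approach trades those series manipulations for differential identities, the l'H\^opital monotone rule, and a sign analysis of an upward parabola; it is more self-contained in that it needs no coefficient-ratio lemma, at the cost of the LMR and a slightly longer computation in part (3), where the paper's argument (a one-line check that $C_n-C_{n+1}>0$) is shorter. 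Both approaches rely on the same external facts for the endpoint values ($\E_a(1)$ and $r'^2\K_a(r)\to0$ as $r\to1^-$).
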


\begin{proof}
For part (1), using (\ref{Ka}) and (\ref{Ea}) to expand $f_{1}$ into power series, we obtain
\begin{align*}
f_1(r)&=\frac{\pi}{2(1-a)}
-\frac{\pi}{2}\su \left[\frac{(a)_{n-1}(1-a)_{n-1}}{(n-1)!^2}
 \frac{(2a^2-2a+1)n+a(a-1)}{(1-a)n^2}\right]r^{2n},
\end{align*}
from which the monotonicity and the limiting value of $f_1$ as $r\to0^+$ follow. By \cite[Lemma 5.4(1)]{AQVV} and (\ref{Ea}), $f_1(1^-)=\sin(\pi a)/(1-a)$.

For part (2), write  $f_2$ as $f_2(r)=af_1(r)+(1-2a)\E_a(r)$. Then by part (1) and (\ref{Ea}), one can easily obtain the monotonicity and the limiting values of $f_2$.

%(1) Applying (\ref{Ka}) and (\ref{Ea}),  we obtain the following series expansions
%\begin{align*}
%f_1(r)&=\frac{\pi(2a^2-5a+4)}{2(1-a)} \\
%& \quad -\frac{\pi}{2}\su \frac{(a,n-1)(1-a,n-1)}{(n-1)!^2}
 %\frac{2a^4-7a^3+3(2n+3)a^2-(9n+4)a+4n}{(1-a)n^2}r^{2n},
%\end{align*}
%from which the monotonicity and the limiting values of $f_1(r)$ as $r\to0$ follow. By \cite[Lemma 5.4(1)]{AQVV}, $f_1(1^-)=\lim_{r\to 1^-}f_1(r)=\frac{2\sin(\pi a)}{1-a}>0$.

For part (3), applying (\ref{Ka}) and (\ref{Ea}), we obtain the following series expansion
\begin{align*}
\left(1+\frac{2a-1}{2(1-a)}r^2\right)\K_a(r)-\E_a(r)
=\frac{\pi}{2}\s \left[\frac{2a^2-2a+n+1}{2(1-a)(n+1)}
\frac{(a)_n(1-a)_n}{n!^2}\right]r^{2(n+1)},
\end{align*}
hence $f_3$ can be written as $f_3(r)=\s A_n r^{2n}/\s B_n r^{2n}$, where
\begin{align*}
A_n=\frac{(a)_n(1-a)_n}{n!^2} \quad \text{and} \quad
B_n=\frac{2a^2-2a+n+1}{2(1-a)(n+1)}\frac{(a)_n(1-a)_n}{n!^2}.
\end{align*}
Let $C_n=A_n/B_n$ for $n\geq 0$. Then
\begin{align*}
C_n-C_{n+1}=\frac{4a(1-a)^2}{(2a^2-2a+n+1)(2a^2-2a+n+2)}>0,
\end{align*}
which shows that the sequence $\{C_n\}$ is strictly decreasing. By \cite[Lemma 2.1]{Sp1}, $f_3$ is strictly decreasing on $(0,1)$. Obviously, $f_2(0^+)=C_0=A_0/B_0=2(1-a)/(2a^2-2a+1)$ and $f_2(1^-)=2(1-a)$.
\end{proof}

%%Proposition 3.4
\begin{pro}\label{pro1}
For arbitrarily given $a\in(0,1/2]$ and for $p\in(0,\infty)$, define the functions $f_4$ and $f_5$ on $(0,1)$ by
\begin{align*}
f_4(r)= m_a(r^p)-pm_a(r) \quad \text{and}  \quad
f_5(r)= \mu_a(r^p)-p\mu_a(r),
\end{align*}
respectively. Then we have following conclusions:

$(1)$ If $p=1$, then $f_{4}(r)=f_{5}(r)\equiv 0$.

$(2)$ If $0<p<1$, then both $f_4$ and $f_5$ are strictly increasing from $(0,1)$ onto $(0,(1-p)R(a)/2)$.

$(3)$ If $p>1$, then both $f_4$ and $f_5$ are strictly decreasing from $(0,1)$
onto $((1-p)R(a)/2,0)$.
\end{pro}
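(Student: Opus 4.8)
The plan is to linearise the two inequalities by a logarithmic shift, reduce each to the strict monotonicity of one auxiliary single‑variable function, and then read off the endpoint behaviour from the asymptotics of $\mu_a$ and $m_a$ near $0$ and $1$. Part (1) is trivial, since $p=1$ forces $f_4\equiv f_5\equiv0$. Assume $p\neq1$. Because $-\log(r^p)+p\log r=0$,
\begin{align*}
f_4(r)=\big[m_a(r^p)+\log(r^p)\big]-p\big[m_a(r)+\log r\big],\qquad f_5(r)=\big[\mu_a(r^p)+\log(r^p)\big]-p\big[\mu_a(r)+\log r\big],
\end{align*}
so it suffices, taking $g$ to be $m_a(\cdot)+\log(\cdot)$ and then $\mu_a(\cdot)+\log(\cdot)$, to understand $r\mapsto g(r^p)-pg(r)$, for which
\begin{align*}
\frac{d}{dr}\big[g(r^p)-pg(r)\big]=\frac{p}{r}\big[\Theta_g(r^p)-\Theta_g(r)\big],\qquad \Theta_g(r):=r\,g'(r).
\end{align*}
On $(0,1)$ we have $r^p>r$ for $0<p<1$ and $r^p<r$ for $p>1$, so once $\Theta_g$ is known to be strictly monotone in both cases the right‑hand side has a fixed sign on $(0,1)$; this gives the strict monotonicity of $f_4$ and $f_5$, and their images are then determined by the endpoint values $g(0^+)$ and $g(1^-)$.

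For $g=\mu_a(\cdot)+\log(\cdot)$ the argument is short: the formula for $d\mu_a/dr$ gives $\Theta_g(r)=1+r\mu_a'(r)=1-\pi^2/\!\big[4r'^2\K_a(r)^2\big]$, and with the derivative formulas for $\K_a$ and $\E_a$,
\begin{align*}
\frac{d}{dr}\Big(r'^2\K_a(r)^2\Big)=-\frac{4(1-a)\K_a(r)}{r}\bigg[\Big(1+\tfrac{2a-1}{2(1-a)}r^2\Big)\K_a(r)-\E_a(r)\bigg].
\end{align*}
The bracket is the denominator of $f_3$ in Proposition \ref{pro2}(3); the power series recorded there has only positive coefficients when $a\in(0,1/2]$, so it is positive. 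Hence $r'^2\K_a(r)^2$ is strictly decreasing and $\Theta_g$ is strictly decreasing on $(0,1)$.

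The hard part will be the case $g=m_a(\cdot)+\log(\cdot)$, because $m_a$ has a genuinely two‑term derivative. From the definitions one has $m_a(r)=\tfrac4{\pi^2}r'^2\mu_a(r)\K_a(r)^2$; substituting this into $\Theta_g(r)=1+r\,m_a'(r)$ and simplifying gives
\begin{align*}
\Theta_g(r)=-\frac{8(1-a)}{\pi\sin(\pi a)}\,\K_a(r')\,D(r),\qquad D(r):=\Big(1+\tfrac{2a-1}{2(1-a)}r^2\Big)\K_a(r)-\E_a(r)>0,
\end{align*}
in which $\K_a(r')$ is decreasing while $D(r)=r^2\K_a(r)/f_3(r)$ is increasing, so monotonicity of the product is not visible and one must differentiate. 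Using $dD/dr=rf_2(r)/r'^2$ (with $f_2$ from Proposition \ref{pro2}(2)), $d\K_a(r')/dr=-2(1-a)\big(\E_a(r')-r^2\K_a(r')\big)/(rr'^2)$, the identity $2(1-a)\big(\E_a(r')-r^2\K_a(r')\big)=(1-a)f_1(r')-r^2\K_a(r')$ (with $f_1$ from Proposition \ref{pro2}(1)), and $D(r)=r^2\K_a(r)/f_3(r)$ (Proposition \ref{pro2}(3)), a direct computation reduces the inequality $\Theta_g'(r)<0$ to
\begin{align*}
f_3(r)\,\K_a(r')\Big[(2a^2-2a+1)r'^2+\tfrac12 r^2\Big]>(1-a)^2 f_1(r'),\qquad 0<r<1.
\end{align*}
Here $(2a^2-2a+1)r'^2+\tfrac12 r^2=(2a^2-2a+1)-2\big(a-\tfrac12\big)^2r^2$ is positive and decreasing (constant at $a=\tfrac12$), so the left‑hand side is a product of decreasing positive functions, hence strictly decreasing; the right‑hand side is $(1-a)^2$ times $f_1(r')$, which is strictly increasing in $r$ since $f_1$ is decreasing (Proposition \ref{pro2}(1)) and $r\mapsto r'$ is decreasing. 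As $r\to1^-$ both sides tend to $\pi(1-a)/2$ (using $f_3(1^-)=2(1-a)$ and $f_1(0^+)=\pi/[2(1-a)]$), so on all of $(0,1)$ the decreasing left side stays above that value and the increasing right side below it; this proves the inequality, so $\Theta_g$ is strictly decreasing on $(0,1)$.

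It remains to pin down the endpoints. Since $m_a(1)=\mu_a(1)=0$, we get $f_4(1^-)=f_5(1^-)=0$. The logarithmic behaviour of $F(a,1-a;1;x)$ as $x\to1^-$, whose leading term $\tfrac{\sin(\pi a)}{\pi}\big(2\p(1)-\p(a)-\p(1-a)-\log(1-x)\big)$ carries the constant $\tfrac{\sin(\pi a)}{\pi}R(a)$, gives $\lim_{r\to0^+}\big(\mu_a(r)+\log r\big)=\lim_{r\to0^+}\big(m_a(r)+\log r\big)=R(a)/2$, whence $f_4(0^+)=f_5(0^+)=(1-p)R(a)/2$. Feeding these monotonicity facts and endpoint values into the first paragraph: for $0<p<1$ the derivative of $f_4$ and $f_5$ keeps a constant sign on $(0,1)$, and these functions map $(0,1)$ strictly monotonically onto the open interval bounded by $(1-p)R(a)/2>0$ and $0$; for $p>1$ the sign is the opposite and the image is bounded by $(1-p)R(a)/2<0$ and $0$; and $p=1$ is the trivial case. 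The single step requiring real effort is the monotonicity of $\Theta_g$ in the $m_a$ case, which draws on all three parts of Proposition \ref{pro2}.
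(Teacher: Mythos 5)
Your proof is correct, but for the $m_a$ case it takes a markedly longer road than the paper's. Both arguments reduce the monotonicity of $f_4$ to showing that $r\mapsto r\,\frac{d}{dr}\bigl(m_a(r)+\log r\bigr)$ (your $\Theta_g$) is strictly decreasing; the only difference is how this quantity is factored. The paper uses $r\,\frac{dm_a(r)}{dr}=1-\frac{4(1-a)}{\pi\sin(\pi a)}\K_a(r)f_1(r')$, so the claim becomes the monotonicity of $\K_a(r)f_1(r')$, which is immediate: it is a product of two positive strictly increasing factors ($\K_a$ is increasing, and $f_1$ is decreasing precomposed with the decreasing map $r\mapsto r'$). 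You instead write $1+r\,\frac{dm_a(r)}{dr}=-\frac{8(1-a)}{\pi\sin(\pi a)}\K_a(r')D(r)$ --- an identity equivalent to the paper's via the generalized Legendre relation $\E_a(r)\K_a(r')+\E_a(r')\K_a(r)-\K_a(r)\K_a(r')=\pi\sin(\pi a)/[4(1-a)]$ --- and since $\K_a(r')$ decreases while $D$ increases, you are forced into a second differentiation and the comparison $f_3(r)\K_a(r')\bigl[(2a^2-2a+1)r'^2+\tfrac12 r^2\bigr]>(1-a)^2f_1(r')$. I checked that reduction (using $f_2+D=\frac{(2a^2-2a+1)r'^2+r^2/2}{1-a}\K_a$ and $D=r^2\K_a/f_3$) and the decreasing-versus-increasing argument at the common limit $\pi(1-a)/2$: it is all correct, and it is a pleasant self-contained verification that exercises all three parts of Proposition~\ref{pro2}, but it does an avoidable amount of work compared with the paper's one-line factorization. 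Your treatment of $f_5$ and of the endpoint values agrees with the paper's.

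One point you should make explicit rather than hide behind the phrase ``strictly monotonically'': your computation (exactly like the paper's own proof) gives $f_4'<0$ and $f_5'<0$ when $0<p<1$, and $f_4'>0$, $f_5'>0$ when $p>1$. Together with $f_4(0^+)=f_5(0^+)=(1-p)R(a)/2$ and $f_4(1^-)=f_5(1^-)=0$, this means the functions are strictly \emph{decreasing} onto $(0,(1-p)R(a)/2)$ in case (2) and strictly \emph{increasing} onto $((1-p)R(a)/2,0)$ in case (3); the words ``increasing'' and ``decreasing'' in parts (2) and (3) of the statement are swapped (the ranges are correct, and only the sign of $f_4$, $f_5$ is used later in the paper). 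Stating the sign of the derivative would make clear that your proof, like the paper's, establishes the corrected version.
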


\begin{proof}
The part (1) is obvious. For the function $f_4$, let $x=r^p$, then by differentiation,
\begin{align}\label{eq7}
\frac{d f_4(r)}{dr}=\frac{4p(1-a)}{\pi r\sin(\pi a)}
[\K_a(r)f_1(r')-\K_a(x)f_1(x')],
\end{align}
where $f_1$ is defined in Proposition \ref{pro2}(1), and hence the function $r\mapsto \K_a(r)f_1(r')$ is strictly increasing on $(0,1)$. It follows from (\ref{eq7}) that $f'_4<0$ if $0<p<1$ and $f'_4>0$ if $p>1$. Moreover, $f_4(1^-)=0$, and  the limiting value $f_4(0^+)=(1-p)R(a)/2$ follows from \cite[Theorem 5.5(3)]{AQVV}.

For the function $f_5$, we also let $x=r^p$. Then differentiating $f_{5}$ gives
\begin{align}\label{eq8}
\frac{d f_5(r)}{dr}=\frac{p\pi^2}{4r}
\left(\frac{1}{r'^2\K_a(r)^2}-\frac{1}{x'^2\K_a(x)^2}\right).
\end{align}
By \cite[Lemma 5.4(1)]{AQVV}, the function $r\mapsto r'^2\K_a(r)^2$ is strictly decreasing on $(0,1)$. It follows from (\ref{eq8}) that $f'_5<0$ if $0<p<1$ and $f'_5>0$ if $p>1$. Furthermore, $f_5(1^-)=0$, and  the limiting value $f_5(0^+)=(1-p)R(a)/2$ follows from  \cite[Theorem 5.5(2)]{AQVV}.
\end{proof}

%% Propostition 3.5
\begin{pro}\label{pro4}
For all $a\in(0,1/2]$ and $r,t\in(0,1)$,
\begin{equation*}
m_a(r)+m_a(t)>m_a(rt),\quad \mu_a(r)+\mu_a(t)>\mu_a(rt).
\end{equation*}
\end{pro}
\begin{proof}
The first inequality was established in \cite[Theorem 2.5]{QV1999}. For the proof of the second one, we let $f_6(r)=\mu_a(r)+\mu_a(t)-\mu_a(rt)$ for arbitrarily given $a\in(0,1/2]$ and $t\in(0,1)$, and let $x=rt$, then
\begin{align*}
\frac{d f_6(r)}{dr}=\frac{\pi^2}{4r}
\left(\frac{1}{x'^2\K_a(x)^2}-\frac{1}{r'^2\K_a(r)^2}\right).
\end{align*}
It is clear to see that $f_6$ is strictly decreasing on $(0,1)$ and hence the second inequality follows from $f_6(1^-)=0$.
\end{proof}

%% Propostition 3.6
\begin{pro}\label{pro3}
For each given $a\in(0,1/2]$ and $x,r\in(0,1)$ with $x<r$. Let $K\in(0,\infty)$, $s=\va_K^a(r)$ and $y=\va_K^a(x)$.
Define the functions $f_7$ and $f_8$ on $(0,\infty)$ by
\begin{align*}
f_7(K)=\frac{s'^2\K_a(s)^3}{y'^2\K_a(y)^3}, \quad
f_8(K)= \frac{\left(1+\frac{2a-1}{2(1-a)}s^2\right)\K_a(s)-\E_a(s)}
{\left(1+\frac{2a-1}{2(1-a)}y^2\right)\K_a(y)-\E_a(y)}.
\end{align*}
Then $f_7$ and $f_8$ are both strictly decreasing on $(0,\infty)$, with ranges $(0,1)$ and $(\mu_a(x)/\mu_a(r),\infty)$, respectively.
\end{pro}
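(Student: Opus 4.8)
The plan is to fix $a\in(0,1/2]$ and $0<x<r<1$, put $u=\mu_a(r)$, $v=\mu_a(x)$ (so $0<u<v$), and use that $s=\varphi_K^a(r)=\mu_a^{-1}(u/K)$ and $y=\varphi_K^a(x)=\mu_a^{-1}(v/K)$ are continuous and strictly increasing in $K$ on $(0,\infty)$, with $s,y\to0^+$ as $K\to0^+$, $s,y\to1^-$ as $K\to\infty$, and $0<y<s<1$ throughout (this uses only that $\mu_a$ is a decreasing homeomorphism of $(0,1)$ onto $(0,\infty)$ and $v>u$). Multiplying the quoted formula for $\partial s/\partial K$ by the elementary identity $r'^2\K_a(r)^2\mu_a(r)=\frac{\pi^2}{4}m_a(r)$ (immediate from the definitions of $m_a$ and $\mu_a$) gives the clean relation $\partial s/\partial K=sm_a(s)/K$, and likewise $\partial y/\partial K=ym_a(y)/K$. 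Hence for any positive differentiable $G$ on $(0,1)$,
\begin{align*}
\frac{d}{dK}\log\frac{G(s)}{G(y)}=\frac1K\bigl[\Psi_G(s)-\Psi_G(y)\bigr],\qquad \Psi_G(t):=\frac{tm_a(t)G'(t)}{G(t)},
\end{align*}
and since $0<y<s<1$ it suffices to prove that $\Psi_G$ is strictly decreasing on $(0,1)$ in order to conclude that $G(s)/G(y)$ is strictly decreasing in $K$; the ranges are then read off from the two endpoint limits.

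For $f_8$ take $G=Q_a$, where $Q_a(t):=\bigl(1+\frac{2a-1}{2(1-a)}t^2\bigr)\K_a(t)-\E_a(t)$, which is positive by Proposition~\ref{pro2}(3). A short computation with the derivative formulas for $\K_a,\E_a$ collapses to the pleasant identity $Q_a'(t)=tf_2(t)/t'^2$ with $f_2$ as in Proposition~\ref{pro2}(2); combining this with $m_a(t)/t'^2=\frac{2}{\pi\sin(\pi a)}\K_a(t)\K_a(t')$ and $t^2\K_a(t)/Q_a(t)=f_3(t)$ (Proposition~\ref{pro2}(3)) yields
\begin{align*}
\Psi_{Q_a}(t)=\frac{2}{\pi\sin(\pi a)}\,f_2(t)\,f_3(t)\,\K_a(t'),
\end{align*}
a product of three positive strictly decreasing functions, hence strictly decreasing; thus $f_8$ is strictly decreasing. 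For the endpoints, $Q_a(t)\sim\frac{\pi(2a^2-2a+1)}{4(1-a)}t^2$ as $t\to0^+$ and $Q_a(t)\sim\frac1{2(1-a)}\K_a(t)$ as $t\to1^-$ (from the series expansion of $Q_a$); using $s/y\to\infty$ as $K\to0^+$ and $\K_a(s)/\K_a(y)=\frac{\mu_a(x)}{\mu_a(r)}\cdot\K_a(s')/\K_a(y')\to\mu_a(x)/\mu_a(r)$ as $K\to\infty$ (since $s',y'\to0^+$; both facts rest on the near-origin behaviour of $\mu_a$), one gets the range $(\mu_a(x)/\mu_a(r),\infty)$.

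For $f_7$ take $G(t)=t'^2\K_a(t)^3$. Substituting the derivative formulas and simplifying with the generalized Legendre relation $\E_a(t)\K_a(t')+\E_a(t')\K_a(t)-\K_a(t)\K_a(t')=\frac{\pi\sin(\pi a)}{4(1-a)}$ produces
\begin{align*}
\Psi_G(t)=3+\frac{4}{\pi\sin(\pi a)}\,\K_a(t)\,S(t),\qquad S(t):=(2-3a)t^2\K_a(t')-3(1-a)\E_a(t').
\end{align*}
The heart of the matter is to show that $S$ is negative and strictly decreasing on $(0,1)$: granting this, $-S$ is positive and strictly increasing, so $\K_a(t)\bigl(-S(t)\bigr)$ is strictly increasing, whence $\K_a(t)S(t)$ and therefore $\Psi_G$ are strictly decreasing and $f_7$ is strictly decreasing. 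Negativity follows from $S(0^+)=-\frac{3\sin(\pi a)}{2}<0$ together with monotonicity, and differentiation gives $S'(t)=\frac{2t}{t'^2}\bigl[(2-3a)a\,t'^2\K_a(t')-(1-a)(\K_a(t')-\E_a(t'))\bigr]$, so the whole problem reduces to proving
\begin{align*}
(2-3a)a\,u^2\K_a(u)<(1-a)\bigl(\K_a(u)-\E_a(u)\bigr)\qquad(0<u<1),
\end{align*}
which I expect to be the main obstacle. I would settle it by a power-series comparison: from the Maclaurin expansions, $u^2\K_a(u)/(\K_a(u)-\E_a(u))=\sum_{n\ge1}c_n u^{2n}\big/\sum_{n\ge1}d_n u^{2n}$ with $c_n/d_n=n/(n-a)$ strictly decreasing in $n$, so by \cite[Lemma~2.1]{Sp1} this quotient is strictly decreasing in $u$ on $(0,1)$ with limit $1/(1-a)$ as $u\to0^+$, hence is bounded above by $1/(1-a)$; since also $(2-3a)a\le(1-a)^2$ (equivalently $(1-2a)^2\ge0$, valid because $a\in(0,1/2]$), the displayed inequality follows, so $S'<0$. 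Finally $f_7\to1$ as $K\to0^+$ (then $s,y\to0^+$, so numerator and denominator both tend to $(\pi/2)^3$), and rewriting $f_7=\bigl(\mu_a(x)/\mu_a(r)\bigr)^3\,s'^2\K_a(s')^3\big/\bigl(y'^2\K_a(y')^3\bigr)$ by means of $\K_a(s)=\frac{\pi}{2\sin(\pi a)}\K_a(s')/\mu_a(s)$ shows $f_7\to0$ as $K\to\infty$ (there $s',y'\to0^+$ and $s'/y'\to0$ by the near-origin asymptotics of $\mu_a$ from \cite{AQVV}); hence the range of $f_7$ is $(0,1)$.
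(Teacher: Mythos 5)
Your argument is correct, and I verified its two nonobvious ingredients: the identity $Q_a'(t)=t f_2(t)/t'^2$ (it does collapse exactly as you claim after multiplying through by $tt'^2/2$ and using $1-t'^2=t^2$), and the coefficient ratio $c_n/d_n=n/(n-a)$ coming from $(a)_n-(a-1)_n=n(a)_{n-1}$. For $f_8$ your route is the paper's in different clothing: the general formula $\frac{d}{dK}\log\frac{G(s)}{G(y)}=\frac1K[\Psi_G(s)-\Psi_G(y)]$ with $\Psi_G(t)=tm_a(t)G'(t)/G(t)$ is exactly the logarithmic differentiation the paper performs by hand, and your $\Psi_{Q_a}=\frac{2}{\pi\sin(\pi a)}f_2(t)f_3(t)\K_a(t')$ coincides with the paper's $f_{11}$, disposed of in the same way as a product of three positive decreasing factors. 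Where you genuinely diverge is $f_7$: the paper factors the relevant quantity as $f_9(t)=t^2\K_a(t')f_{10}(t)$ with $f_{10}(t)=\K_a(t)-3(1-a)[\E_a(t)-t'^2\K_a(t)]/t^2$, and kills $f_{10}$ in one line by a Maclaurin expansion whose coefficients $\left(1-\frac{3a(1-a)}{n+1}\right)\frac{(a)_n(1-a)_n}{(n!)^2}$ are manifestly positive, monotonicity of $t^2\K_a(t')$ then being free from AQVV. You instead route through the generalized Legendre relation to reach $\Psi_G=3+\frac{4}{\pi\sin(\pi a)}\K_a(t)S(t)$ and reduce everything to $(2-3a)a\,u^2\K_a(u)<(1-a)(\K_a(u)-\E_a(u))$, settled by the ratio-of-power-series lemma plus $(2-3a)a\le(1-a)^2$; note that at $a=1/2$ the latter is an equality, so the strictness of your conclusion rests (correctly) on the strict monotonicity of the series quotient. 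Your version is longer and trades the paper's ready-made factorization for a self-contained inequality with an explicit sharp constant $1/(1-a)$; the paper's is shorter but requires spotting the factorization $f_9=t^2\K_a(t')f_{10}$. The endpoint limits you compute agree with the paper's, and the monotonicity of $K\mapsto s,y$ together with $y<s$ is used identically in both arguments.
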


\begin{proof} By logarithmic differentiation of $f_{7}$, we get
\begin{align*}
\frac{1}{f_7(K)}\frac{d f_7(K)}{dK}
&=-\frac{4}{\pi K\sin(\pi a)}s^2\K_a(s)\K_a(s')
 +\frac{12(1-a)}{\pi K\sin(\pi a)}\K_a(s') [\E_a(s)-s'^2\K_a(s)] \\
&\quad+\frac{4}{\pi K\sin(\pi a)}y^2\K_a(y)\K_a(y')
 -\frac{12(1-a)}{\pi K\sin(\pi a)}\K_a(y') [\E_a(y)-y'^2\K_a(y)] \\
&=\frac{4}{\pi K\sin(\pi a)} [f_9(y)-f_9(s)],
\end{align*}
where $f_9(r)=r^2\K_a(r')\K_a(r)-3(1-a)\K_a(r')\left[\E_a(r)-r'^2\K_a(r)\right]$.
Write $f_9(r)=r^2\K_a(r')f_{10}(r)$, where $f_{10}(r)=\K_a(r)-3(1-a)[\E_a(r)-r'^2\K_a(r)]/r^2$. By (\ref{Ka}) and (\ref{Ea}), we obtain
\begin{align*}
f_{10}(r)=\frac{\pi}{2} \s \left[ \left(1-\frac{3a(1-a)}{n+1}\right)
\frac{(a)_n(1-a)_n}{(n!)^2}\right] r^{2n},
\end{align*}
which shows that, for each given $a\in(0,1/2]$, $f_{10}$ is positive and strictly increasing on $(0,1)$. So is $f_9$ as it is a product of two positive and strictly increasing functions on $(0,1)$ by \cite[Lemma 5.4]{AQVV}. Therefore, $df_7(K)/dK<0$ for $K\in(0,\infty)$ since $y<s$,
and the monotonicity of $f_7$ on $(0,\infty)$ follows. Obviously, $f_7(0^+)=1$, and by (\ref{va+va'}) and \cite[Theorem 5.5(2)]{AQVV},
\begin{align*}
\lim_{K\to \infty}f_7(K)
&=\lim_{K\to \infty} \frac{\mu_a(y)^3s'^2}{\mu_a(s)^3y'^2}
=\frac{\mu_a(x)^3}{\mu_a(r)^3}
\lim_{K\to \infty} \left( \frac{\e^{\log s'+\mu_a(s')}}{\e^{\log y'+\mu_a(y')}} \e^{\mu_a(y')-\mu_a(s')} \right)^2 \\
&=\frac{\mu_a(x)^3}{\mu_a(r)^3} \lim_{K\to \infty}
\left( \e^{\mu_a(x')-\mu_a(r')} \right)^{2K}=0.
\end{align*}

Logarithmic differentiation of $f_{8}$ gives
\begin{align}\label{eq9}
\frac{1}{f_8(K)}\frac{d f_8(K)}{dK}
&=\frac{2s^2\K_a(s)\K_a(s')}{\pi K \sin(\pi a)}
\frac{\frac{a(2a-1)}{1-a}s'^2\K_a(s)+\E_a(s)}
{\left(1+\frac{2a-1}{2(1-a)}s^2\right)\K_a(s)-\E_a(s)} \nonumber\\
&\quad- \frac{2y^2\K_a(y)\K_a(y')}{\pi K \sin(\pi a)}
\frac{\frac{a(2a-1)}{1-a}y'^2\K_a(y)+\E_a(y)}
{\left(1+\frac{2a-1}{2(1-a)}y^2\right)\K_a(y)-\E_a(y)} \nonumber\\
&=\frac{2[f_{11}(s)-f_{11}(y)]}{\pi K \sin(\pi a)},
\end{align}
where $f_{11}(r)=\K_a(r')f_2(r)f_3(r)$, $f_2$ and $f_3$ are defined in Proposition \ref{pro2}, so that $f_{11}$ is strictly decreasing on $(0,1)$ as it is a product of three positive and strictly decreasing functions. It follows from $y<s$ and (\ref{eq9}) that $df_8(K)/dK<0$ for $K\in(0,\infty)$, hence the monotonicity of $f_8$ on $(0,\infty)$ follows. By Proposition \ref{pro2} and \cite[Theorem 5.5(2)]{AQVV},
\begin{align*}
\lim_{K\to 0^+}f_8(K)&=\lim_{K\to 0^+}\frac{s^2}{y^2}
=\lim_{K\to 0^+} \left( \frac{\e^{\log s+\mu_a(s)}}{\e^{\log y+\mu_a(y)}} \e^{\mu_a(y)-\mu_a(s)} \right)^2 \\
&=\lim_{K\to 0^+} \left(\e^{\mu_a(x)-\mu_a(r)} \right)^{\frac2K}=\infty,
\end{align*}
and by (\ref{Ramanujan1}),
\begin{align*}
\lim_{K\to \infty}f_8(K)=\lim_{K\to \infty} \frac{\left(1+\frac{2a-1}{2(1-a)}s^2\right)\frac{K\K_a(s')\K_a(r)}{\K_a(r')}-\E_a(s)}
{\left(1+\frac{2a-1}{2(1-a)}y^2\right)\frac{K\K_a(y')\K_a(x)}{\K_a(x')}-\E_a(y)}
=\frac{\mu_a(x)}{\mu_a(r)}.
\end{align*}
This completes the proof.
\end{proof}

%% Theorem 3.7
\begin{theorem} \label{th5}
	Let $\lambda(a,r,t)$ and $\tau(a,r,t)$ be real functions defined on $(0,1/2]\times(0,1)\times(0,1)$. For arbitrarily given $(a,r,t)\in(0,1/2]\times(0,1)\times(0,1)$, define the functions $g_1$ and $g_2$ on $(0,\infty)$ by
	\begin{align*}
		g_1(K)=\frac{\va_K^a(r)\va_K^a(t)
			\left({\rm e}^{\la(a,r,t)}\right)^{1/K}}{\va_K^a(rt)}
		\quad \text{and} \quad
		g_2(K)=\frac{\va_{1/K}^a(r)\va_{1/K}^a(t)
			\left({\rm e}^{\tau(a,r,t)}\right)^K}{\va_{1/K}^a(rt)},
	\end{align*}
	respectively. Then we have the following conclusions:
	
	$(1)$ $g_1$ is strictly increasing (decreasing) on $(0,1]$ if and only if
	\begin{align*}
		\la(a,r,t)\leq m_a(r)+m_a(t)-m_a(rt) \,
		(\lambda(a,r,t)\geq \mu_a(r)+\mu_a(t)-\mu_a(rt), \text{respectively}).
	\end{align*}
	
	Moreover, $g_1$ is strictly decreasing on $(1,\infty)$ if and only if
	$\lambda(a,r,t)\geq m_a(r)+m_a(t)-m_a(rt)$.
	
	$(2)$ $g_2$ is strictly increasing on $(0,1]$ if and only if $\tau(a,r,t)\geq m_a(r)+m_a(t)-m_a(rt)$. Moreover, $g_2$ is strictly increasing (decreasing) on $(1,\infty)$ if and only if
	\begin{align*}
		\tau(a,r,t)\geq \mu_a(r)+\mu_a(t)-\mu_a(rt)\,
		(\tau(a,r,t)\leq m_a(r)+m_a(t)-m_a(rt), \text{respectively}).
	\end{align*}
\end{theorem}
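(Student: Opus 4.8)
The plan is to reduce Theorem \ref{th5} to a monotonicity analysis of the logarithmic derivative of $g_1$ and $g_2$ in $K$, and to feed that analysis with the structural propositions already established. I would write $\log g_1(K) = \log \va_K^a(r) + \log \va_K^a(t) - \log \va_K^a(rt) + \frac{1}{K}\la(a,r,t)$ and differentiate in $K$, using the formula $\partial s/\partial K = \frac{4ss'^2\K_a(s)^2}{\pi^2}\frac{\mu_a(s)}{K}$ for $s=\va_K^a(r)$. Since $\mu_a(s)/K = \mu_a(r)$ by (\ref{Ramanujan1}), the derivative of $\log\va_K^a(r)$ is $\frac{4s'^2\K_a(s)^2\mu_a(r)}{\pi^2 K}$ up to a factor; collecting the three such terms and the $-\la/K^2$ term, one finds
\begin{align*}
K^2 g_1'(K)/g_1(K) = \frac{4}{\pi^2}\Big[\mu_a(r)s_1'^2\K_a(s_1)^2 + \mu_a(t)s_2'^2\K_a(s_2)^2 - \mu_a(rt)s_3'^2\K_a(s_3)^2\Big] - \la(a,r,t),
\end{align*}
where $s_1=\va_K^a(r)$, $s_2=\va_K^a(t)$, $s_3=\va_K^a(rt)$. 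The sign of $g_1'$ is thus governed by comparing $\la$ with the bracketed expression, call it $h(K)$, as $K$ ranges over $(0,1]$ and over $(1,\infty)$ separately.

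The crux is then to show that $h(K)$ is monotone in $K$ and to identify its limits. I expect $h$ to be decreasing on $(0,\infty)$: this should follow from Proposition \ref{pro3}(1), which gives that $f_7(K)=s'^2\K_a(s)^3/y'^2\K_a(y)^3$ is strictly decreasing — writing each term $\mu_a(r)s'^2\K_a(s)^2 = \frac{\pi}{2\sin(\pi a)}\cdot \frac{\K_a'(s)}{\K_a(s)}\cdot s'^2\K_a(s)^2$ and using $\mu_a(r)=\frac{\pi}{2\sin(\pi a)}\K_a'(s)/\K_a(s)$ after the modular substitution, one can rewrite $h(K)$ in a form where Proposition \ref{pro3} and Lemma 5.4 of \cite{AQVV} apply; the superadditivity estimates of Proposition \ref{pro4} control the endpoint behaviour. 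At $K=1$ we have $s_i$ equal to $r,t,rt$ respectively, so $h(1) = \frac{4}{\pi^2}[\mu_a(r)r'^2\K_a(r)^2 + \mu_a(t)t'^2\K_a(t)^2 - \mu_a(rt)(rt)'^2\K_a(rt)^2]$. Using $\mu_a(r)r'^2\K_a(r)^2 = \frac{\pi}{2\sin(\pi a)}\cdot\frac{\pi}{2}\cdot r'^2\K_a(r)\K_a'(r) = \frac{\pi^2}{4}m_a(r)$ (directly from the definitions of $m_a$ and $\mu_a$), we get $h(1) = m_a(r)+m_a(t)-m_a(rt)$. For the limits, $\lim_{K\to 0^+} h(K)$: from (\ref{va+va'}) and the asymptotics $\mu_a$ versus $\log$, $s'^2\K_a(s)^2$ behaves like a constant (using $\K_a(s)\K_a'(s)$ and the relation to $m_a$ at $s\to 1$), giving $\lim_{K\to 0^+}h(K) = \mu_a(r)+\mu_a(t)-\mu_a(rt)$; and $\lim_{K\to\infty} h(K) = 0$ because $s_i'\to 0$ fast enough, exactly as in the computation of $\lim_{K\to\infty}f_7(K)=0$ in Proposition \ref{pro3}.

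Granting that $h$ is strictly decreasing on $(0,\infty)$ with $h(0^+)=\mu_a(r)+\mu_a(t)-\mu_a(rt)$, $h(1)=m_a(r)+m_a(t)-m_a(rt)$, and $h(\infty)=0$, the conclusions for $g_1$ follow mechanically: on $(0,1]$, $g_1'>0$ throughout iff $\la \le \min_{(0,1]} h = h(1) = m_a(r)+m_a(t)-m_a(rt)$, while $g_1'<0$ throughout $(0,1]$ iff $\la \ge \sup_{(0,1]}h = h(0^+) = \mu_a(r)+\mu_a(t)-\mu_a(rt)$; on $(1,\infty)$, since $h$ decreases from $h(1)$ to $0$, $g_1'<0$ throughout iff $\la \ge h(1) = m_a(r)+m_a(t)-m_a(rt)$. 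The statements for $g_2(K)=g_1(1/K)^{?}$ — more precisely, $g_2$ is essentially $g_1$ with $K$ replaced by $1/K$ and $\la$ by $\tau$ — are obtained by the substitution $K\mapsto 1/K$, which swaps the intervals $(0,1]$ and $[1,\infty)$ and reverses monotonicity, turning the three cases for $g_1$ into the three cases listed for $g_2$. The main obstacle is the rigorous proof that $h(K)$ is strictly decreasing on all of $(0,\infty)$ and the careful evaluation of $h(0^+)$; this is where the combined force of Propositions \ref{pro2}, \ref{pro3}, \ref{pro4} and the limit formulas of \cite[Theorem 5.5]{AQVV} must be marshalled, keeping track of the direction of monotonicity through the modular substitution $\mu_a(s)=\mu_a(r)/K$.
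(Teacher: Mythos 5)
Your reduction of the problem to comparing $\la(a,r,t)$ with
\begin{align*}
h(K)=\frac{4}{\pi^2}\left[\mu_a(r)\,s_1'^2\K_a(s_1)^2+\mu_a(t)\,s_2'^2\K_a(s_2)^2-\mu_a(rt)\,s_3'^2\K_a(s_3)^2\right]
\end{align*}
is exactly the right first step, and your identifications $h(1)=m_a(r)+m_a(t)-m_a(rt)$, $h(0^+)=\mu_a(r)+\mu_a(t)-\mu_a(rt)$ and $h(\infty)=0$ are correct. The genuine gap is the central claim, left as ``I expect $h$ to be decreasing on $(0,\infty)$'', which is both unproved and false. Differentiating $h$ once more in $K$ shows that the sign of $h'(K)$ agrees with the sign of an auxiliary function $g_4(K)$ which, by Proposition \ref{pro3}, is strictly \emph{increasing} on $(0,\infty)$ from $-\infty$ to $\mu_a(rt)^2>0$. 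Hence $g_4$ changes sign at a unique $K_0$, so $h$ is strictly decreasing on $(0,K_0)$ and strictly \emph{increasing} on $(K_0,\infty)$; since $h$ increases to its limit $0$ on $(K_0,\infty)$, it is negative there, which is incompatible with a globally decreasing $h$ tending to $0$ from a positive value $h(1)$. Proposition \ref{pro3} therefore cannot deliver what you want it to; it delivers the opposite monotonicity for the quantity that actually controls $h'$.

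The missing work is the localization $K_0>1$, i.e.\ the inequality $g_4(1)<0$. This is the one nontrivial pointwise estimate in the argument: it requires combining the monotonicity of $\K_a'$ and of Proposition \ref{pro2}(3) with the superadditivity $m_a(r)+m_a(t)>m_a(rt)$ of Proposition \ref{pro4} to bound a three-term expression (the quantity $g_5(a,r,t)$ in the paper). Only after $K_0>1$ is established does one know that $h$ is strictly decreasing on all of $(0,1]$, so that $\inf_{(0,1]}h=h(1)$ and $\sup_{(0,1]}h=h(0^+)$, and that $\sup_{[1,\infty)}h=h(1)$ (the latter survives the dip of $h$ below zero on $(K_0,\infty)$ because $h\le h(1)$ there in any case). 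Your final conclusions happen to coincide with the correct ones, but as written the argument does not prove them: you must replace the global-monotonicity claim by this piecewise analysis and supply the $K=1$ sign computation. (A minor slip: the modular relation is $\mu_a(s)=\mu_a(r)/K$, not $\mu_a(s)/K=\mu_a(r)$; your displayed formula for $K^2g_1'(K)/g_1(K)$ is nevertheless correct.)
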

\begin{proof} For part (1), let $x=rt$, $s=\va_K^a(r)$, $u=\va_K^a(t)$ and $y=\va_K^a(x)$. By logarithmic differentiation,
	\begin{align*}
		\frac{1}{g_1(K)}\frac{\partial g_1(K)}{\partial K}
		&=\frac{1}{s}\frac{\partial s}{\partial K}
		+\frac{1}{u}\frac{\partial u}{\partial K}
		-\frac{1}{y}\frac{\partial y}{\partial K}-\frac{\la(a,r,t)}{K^2} \\
		&=\frac{4s'^2\K_a(s)^2}{\pi^2 K^2}\mu_a(r)
		+\frac{4u'^2\K_a(u)^2}{\pi^2 K^2}\mu_a(t)
		-\frac{4y'^2\K_a(y)^2}{\pi^2 K^2}\mu_a(x)-\frac{\la(a,r,t)}{K^2}\\
		&=\frac{g_3(K,a,r,t)-\la(a,r,t)}{K^2},
	\end{align*}
	where
	\begin{align*}
		g_3(K,a,r,t)&=\frac{4}{\pi^2} \left[
		s'^2\K_a(s)^2\mu_a(r)+u'^2\K_a(u)^2\mu_a(t)-y'^2\K_a(y)^2\mu_a(x)\right].
	\end{align*}
	
	Differentiating $g_{3}$ with respect to $K$ gives
\begin{align}\label{g3'}
	&\frac{\pi^2}{4}\frac{\partial g_3(K,a,r,t)}{\partial K}
	=\left\{\frac{4(1-a)\K_a(s)[\E_a(s)-s'^2\K_a(s)]}{s}-2s\K_a(s)^2\right\}
	\mu_a(r) \frac{\partial s}{\partial K} \nonumber \\
	&\quad+ \left\{\frac{4(1-a)\K_a(u)[\E_a(u)-u'^2\K_a(u)]}{u}-2u\K_a(u)^2\right\}
	\mu_a(t) \frac{\partial u}{\partial K} \nonumber \\
	&\quad- \left\{\frac{4(1-a)\K_a(y)[\E_a(y)-y'^2\K_a(y)]}{y}-2y\K_a(y)^2\right\}
	\mu_a(x) \frac{\partial y}{\partial K} \nonumber \\
	=&\frac{16(1-a)\mu_a(x)^2 y'^2\K_a(y)^3}{\pi^2 K^2}
	\left[\left(1+\frac{2a-1}{2(1-a)}y^2\right)\K_a(y)-\E_a(y)\right]\nonumber\\
	&\quad-\frac{16(1-a)\mu_a(r)^2 s'^2\K_a(s)^3}{\pi^2 K^2}
	\left[\left(1+\frac{2a-1}{2(1-a)}s^2\right)\K_a(s)-\E_a(s)\right]\nonumber\\
	&\quad-\frac{16(1-a)\mu_a(t)^2 u'^2\K_a(u)^3}{\pi^2 K^2}
	\left[\left(1+\frac{2a-1}{2(1-a)}u^2\right)\K_a(u)-\E_a(u)\right]\nonumber\\
    =&\frac{16(1-a)y'^2\K_a(y)^3}{\pi^2 K^2}
	\left[\left(1+\frac{2a-1}{2(1-a)}y^2\right)\K_a(y)-\E_a(y) \right]
	g_4(K,a,r,t),
\end{align}
where
\begin{align}\label{g4}
	g_4(K,a,r,t)&=\mu_a(x)^2-\mu_a(r)^2 \frac{s'^2\K_a(s)^3}{y'^2\K_a(y)^3}
	\frac{\left(1+\frac{2a-1}{2(1-a)}s^2\right)\K_a(s)-\E_a(s)}
	{\left(1+\frac{2a-1}{2(1-a)}y^2\right)\K_a(y)-\E_a(y)} \nonumber \\
	&\quad -\mu_a(t)^2 \frac{u'^2\K_a(u)^3}{y'^2\K_a(y)^3} \frac{\left(1+\frac{2a-1}{2(1-a)}u^2\right)\K_a(u)-\E_a(u)}
	{\left(1+\frac{2a-1}{2(1-a)}y^2\right)\K_a(y)-\E_a(y)}.
\end{align}
According to Proposition \ref{pro3}, it is clear to see that the function $K\mapsto g_4(K,a,r,t)$ is strictly increasing on $(0,\infty)$, and
\begin{equation}\label{eq1}
	\lim_{K\to 0^+}g_4(K,a,r,t)=-\infty, \quad	\lim_{K\to \infty}g_4(K,a,r,t)=\mu_a(x)^2.
\end{equation}
Furthermore, one has
\begin{equation}\label{eq2}
g_4(1,a,r,t)=\frac{\pi^2}{4[\sin(\pi a)]^2}
\frac{g_5(a,r,t)} {x'^2 \K_a(x)^3
\left[\left(1+\frac{2a-1}{2(1-a)}x^2\right)\K_a(x)-\E_a(x)\right]}<0,
\end{equation}
since
\begin{align*}
	g_5(a,r,t)=&{x'}^2 \K_a(x)\K_a(x')^2
	\left[\left(1+\frac{2a-1}{2(1-a)}x^2\right)\K_a(x)-\E_a(x)\right]\\
	&-{r'}^2 \K_a(r)\K_a(r')^2
	\left[\left(1+\frac{2a-1}{2(1-a)}r^2\right)\K_a(r)-\E_a(r)\right]\\
	&-{t'}^2 \K_a(t)\K_a(t')^2
	\left[\left(1+\frac{2a-1}{2(1-a)}t^2\right)\K_a(t)-\E_a(t)\right]\\
	<&{x'}^2 \K_a(x)\K_a(x')^2
	\left[\left(1+\frac{2a-1}{2(1-a)}x^2\right)\K_fa(x)-\E_a(x)\right]\\
	&-{r'}^2 \K_a(r)\K_a(r') \K_a(x')
	\left[\left(1+\frac{2a-1}{2(1-a)}x^2\right)\K_a(x)-\E_a(x)\right]\\
	&-{t'}^2 \K_a(t)\K_a(t) \K_a(x')
	\left[\left(1+\frac{2a-1}{2(1-a)}x^2\right)\K_a(x)-\E_a(x)\right]\\
	=&\frac{\pi \sin(\pi a)}{2} \K_a(x')[m_a(x)-m_a(r)-m_a(t)] \\
	&\times \left[\left(1+\frac{2a-1}{2(1-a)}x^2\right)\K_a(x)-\E_a(x)\right]<0
\end{align*}
for each $a\in(0,1/2]$ and $r,t\in(0,1)$ by \cite[Lemma 5.4(1)]{AQVV}, Proposition \ref{pro2} (3) and Proposition \ref{pro4}.

It follows from \eqref{eq1} and \eqref{eq2} together with the monotonicity property of $K\mapsto g_4(K,a,r,t)$ that there exists $K_0\in(1,\infty)$ such that $g_4(K,a,r,t)<0$ for $K\in(0,K_0)$ and $g_4(K,a,r,t)>0$ for $K\in(K_0,\infty)$. This, in conjunction with (\ref{g3'}) and (\ref{g4}), shows that $K\mapsto g_3(K,a,r,t)$ is strictly decreasing on $(0,K_0)$ and then strictly increasing on $(K_0,\infty)$. Moreover, by Proposition \ref{pro4} and \cite[Lemma 5.4(1)]{AQVV},
\begin{equation*}
	g_{3}(0^+,a,r,t)=\mu_a(r)+\mu_a(t)-\mu_a(rt)>0, \quad
	g_3(1,a,r,t)=m_a(r)+m_a(t)-m_a(rt)>0,
\end{equation*}
\begin{equation*}
g_3(\infty,a,r,t)=0.	
\end{equation*}
Therefore, from the above the limiting values and the piecewise monotonicity of $K\mapsto g_3(K,a,r,t)$, we obtain
\begin{align*}
&\frac{\partial g_1}{\partial K}\geq0 \,\,  \text{for} \,\,  K\in(0,1]
	\Longleftrightarrow \la(a,r,t)\leq \inf_{K\in(0,1]} g_3(K,a,r,t)
	=m_a(r)+m_a(t)-m_a(rt), \\
&\frac{\partial g_1}{\partial K}\leq0 \,\,  \text{for} \,\,  K\in(0,1]
	\Longleftrightarrow \la(a,r,t)\geq \sup_{K\in(0,1]} g_3(K,a,r,t)
	=\mu_a(r)+\mu_a(t)-\mu_a(rt), \\
&\frac{\partial g_1}{\partial K}\leq0 \,\,  \text{for} \,\,  K\in[1,\infty)
	\Longleftrightarrow \la(a,r,t)\geq \sup_{K\in[1,\infty)} g_3(K,a,r,t)
	=m_a(r)+m_a(t)-m_a(rt),
\end{align*}
which yield the assertion of part (1). Part (2) follows from part (1) and the identity $g_2(K)=g_1(1/K)$.
\end{proof}

\begin{proof}[{\bf  Proof of Theorem \ref{th3}}] Taking $\lambda(a,r,t)=m_{a}(r)+m_{a}(t)-m_{a}(rt)$ in Theorem \ref{th5}, we obtain that $g_{1}(K)$ is strictly increasing on $(0,1]$, so that $g_{1}(K)\leq g_{1}(1)=\e^{\lambda(a,r,t)}=\e^{m_{a}(r)+m_{a}(t)-m_{a}(rt)}$. The asserted ``if" result of part (1) follows. Similarly, putting $\tau(a,r,t)=m_{a}(r)+m_{a}(t)-m_{a}(rt)$ and $\tau(a,r,t)=\mu_{a}(r)+\mu_{a}(t)-\mu_{a}(rt)$ into $g_{2}(K)$ in Theorem \ref{th5}, and then applying Theorem \ref{th5}(2), we obtain the ``if" result of part (2). In what follows we need to prove that ``only if" for part (1), since the proof of part (2) is similar. Denote $s=\va_K^a(r)$, $u=\va_K^a(t)$ and $y=\va_K^a(x)$. In (\ref{In7}), taking logarithm, rasing to power $K/(K-1)$ and letting $K\to1$, we obtain
\begin{align*}
	\lim_{K\to1}\frac{\log s+\log u-\log y}{1-1/K}\leq \alpha(a,r,t).
\end{align*}
Then by l'H\^{o}pital rule, the left-hand side of the above inequality is equal to
\begin{align*}
\lim_{K\to1}\frac{\log s+\log u-\log y}{1-1/K}=m_a(r)+m_a(t)-m_a(rt).
\end{align*}
This completes the proof.
\end{proof}

%% Theorem 3.8
\begin{theorem}\label{th6}
Let $\xi(a,r)$ and $\rho(a,r)$ be  real functions defined on $(0,1/2]\times(0,1)$. For arbitrarily given $(a,r)\in(0,1/2]\times(0,1)$ and for $p\in(0,\infty)$, define the functions $g_6$ and $g_7$ on $[0,\infty)$ by
\begin{align*}
g_6(K)=\frac{\va_K^a(r)^p \left({\rm e}^{\xi(a,r)}\right)^{\frac1K}}{\va_K^a(r^p)}
\quad \text{and} \quad
g_7(K)=\frac{\va_{1/K}^a(r)^p\left({\rm e}^{\rho(a,r)}\right)^K }{\va_{1/K}^a(r^p)},
\end{align*}
respectively. Then we have the following conclusions:

$(1)$ If $0<p<1$, $g_6$ is strictly increasing (decreasing) on $(0,1]$ if and only if
\begin{align*}
\xi(a,r)\leq p\mu_a(r)-\mu_a(r^p) \,
(\xi(a,r)\geq pm_a(r)-m_a(r^p), \text{respectively}).
\end{align*}
Moreover, $g_6$ is strictly increasing on $[1,\infty)$ if and only if $\xi(a,r)\leq pm_a(r)-m_a(r^p)$.

If $p>1$, $g_6$ is strictly increasing (decreasing) on $(0,1]$ if and only if
\begin{align*}
\xi(a,r)\leq pm_a(r)-m_a(r^p)\,
(\xi(a,r)\geq p\mu_a(r)-\mu_a(r^p), \text{respectively}).
\end{align*}
Moreover, $g_6$ is strictly decreasing on $[1,\infty)$ if and only if $\xi(a,r)\geq pm_a(r)-m_a(r^p)$.

$(2)$ If $0<p<1$, $g_7$ is strictly decreasing on $(0,1]$ if and only if
$\rho(a,r)\leq pm_a(r)-m_a(r^p)$. Moreover, $g_7$ is strictly increasing (decreasing) on $[1,\infty)$ if and only if
\begin{align*}
\rho(a,r)\geq pm_a(r)-m_a(r^p)\,
 ( \rho(a,r)\leq p\mu_a(r)-\mu_a(r^p), \text{respectively}).
\end{align*}

If $p>1$,  $g_7$ is strictly increasing on $(0,1]$ if and only if
$\rho(a,r)\geq pm_a(r)-m_a(r^p)$. Moreover, $g_7$ is strictly increasing (decreasing) on $[1,\infty)$ if and only if
\begin{align*}
\rho(a,r)\geq p\mu_a(r)-\mu_a(r^p)\,
(\rho(a,r)\leq pm_a(r)-m_a(r^p), \text{respectively}).
\end{align*}
\end{theorem}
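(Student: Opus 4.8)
\emph{Proof proposal.} The plan is to treat Theorem~\ref{th6} as the power analogue of Theorem~\ref{th5}: one lets $r$ (taken with multiplicity $p$) and $r^p$ play the roles of the two factors $r,t$ and their product $rt$, so the argument runs in parallel with the proof of Theorem~\ref{th5}, with Proposition~\ref{pro1} taking the place of Proposition~\ref{pro4} and with one extra case distinction forced by the sign of $p-1$ (which decides whether $r^p<r$ or $r^p>r$). Fix $(a,r)\in(0,1/2]\times(0,1)$ and $p\in(0,\infty)$, set $x=r^p$, $s=\va_K^a(r)$, $y=\va_K^a(x)$. Logarithmic differentiation of $g_6$ in $K$, using $\frac1s\frac{\partial s}{\partial K}=\frac{4s'^2\K_a(s)^2}{\pi^2K^2}\mu_a(r)$ and its analogue for $y$, gives $\frac1{g_6}\frac{\partial g_6}{\partial K}=\bigl(g_8(K,a,r)-\xi(a,r)\bigr)/K^2$, where $g_8(K,a,r)=\frac{4}{\pi^2}\bigl[p\,s'^2\K_a(s)^2\mu_a(r)-y'^2\K_a(y)^2\mu_a(x)\bigr]$. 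Since $g_6>0$, on any subinterval $I\subseteq(0,\infty)$ the function $g_6$ is strictly increasing (resp.\ strictly decreasing) on $I$ if and only if $\xi(a,r)\le\inf_{I}g_8$ (resp.\ $\xi(a,r)\ge\sup_{I}g_8$); so everything reduces to describing the piecewise monotonicity of $K\mapsto g_8(K,a,r)$ on $(0,1]$ and $[1,\infty)$ together with its boundary values.

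Next I would differentiate $g_8$ once more. Using $\frac{d}{dz}\bigl(z'^2\K_a(z)^2\bigr)=-\frac{4(1-a)\K_a(z)}{z}\bigl[(1+\frac{2a-1}{2(1-a)}z^2)\K_a(z)-\E_a(z)\bigr]$ together with the derivative formula for $\va_K^a$, the same cancellation as in the passage from \eqref{g3'} to \eqref{g4} yields $\frac{\partial g_8}{\partial K}=(\text{a positive factor})\cdot g_9(K,a,r)$, where
\[
g_9(K,a,r)=\mu_a(x)^2-p\,\mu_a(r)^2\,\frac{s'^2\K_a(s)^3}{y'^2\K_a(y)^3}\cdot\frac{\bigl(1+\frac{2a-1}{2(1-a)}s^2\bigr)\K_a(s)-\E_a(s)}{\bigl(1+\frac{2a-1}{2(1-a)}y^2\bigr)\K_a(y)-\E_a(y)}.
\]
The case split enters here. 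If $p>1$ then $x=r^p<r$, and Proposition~\ref{pro3} applies directly (with $r$ and $x$ as its two arguments): the two ratios appearing in $g_9$ are strictly decreasing positive functions of $K$, the first with range $(0,1)$ and the second with range $(\mu_a(x)/\mu_a(r),\infty)$, so their product is strictly decreasing and hence $g_9$ is strictly \emph{increasing} on $(0,\infty)$, with $g_9(0^+)=-\infty$ and $g_9(\infty)=\mu_a(x)^2>0$. If $0<p<1$ then $x=r^p>r$, and one applies Proposition~\ref{pro3} with its two arguments interchanged: the two ratios in $g_9$ become reciprocals of strictly decreasing functions, hence strictly increasing, so $g_9$ is strictly \emph{decreasing} on $(0,\infty)$, with $g_9(0^+)=\mu_a(x)^2>0$ and $g_9(\infty)=-\infty$. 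In both cases $g_9$ has a unique zero $K_0\in(0,\infty)$, so $g_8$ is strictly decreasing on $(0,K_0)$ and strictly increasing on $(K_0,\infty)$ when $p>1$, and the other way round when $0<p<1$.

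Then I would read off the boundary values of $g_8$ directly: as $K\to0^+$ one has $s,y\to0$ and $z'^2\K_a(z)^2\to\pi^2/4$, so $g_8(0^+)=p\mu_a(r)-\mu_a(r^p)$; at $K=1$ one has $s=r$ and $y=x=r^p$, and with $z'^2\K_a(z)^2\mu_a(z)=\frac{\pi^2}{4}m_a(z)$ this gives $g_8(1)=pm_a(r)-m_a(r^p)$; as $K\to\infty$ one has $s,y\to1$ while $\mu_a(r),\mu_a(x)$ stay fixed and $z'^2\K_a(z)^2\to0$, so $g_8(\infty)=0$. Now Proposition~\ref{pro1} tells us that $pm_a(r)-m_a(r^p)$ and $p\mu_a(r)-\mu_a(r^p)$ are both positive when $p>1$ and both negative when $0<p<1$. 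Since on $[K_0,\infty)$ the function $g_8$ is monotone with limit $0$, there it has the sign opposite to that of $g_8(1)=pm_a(r)-m_a(r^p)\neq0$, which forces $K_0>1$ in both cases; hence $g_8$ is strictly monotone on $(0,1]$ (decreasing if $p>1$, increasing if $0<p<1$), and one obtains $\inf_{(0,1]}g_8,\ \sup_{(0,1]}g_8\in\{pm_a(r)-m_a(r^p),\ p\mu_a(r)-\mu_a(r^p)\}$, with the assignment determined by the sign of $p-1$, together with $\sup_{[1,\infty)}g_8=pm_a(r)-m_a(r^p)$ for $p>1$ and $\inf_{[1,\infty)}g_8=pm_a(r)-m_a(r^p)$ for $0<p<1$. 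Inserting these infima and suprema into the equivalence from the first step gives precisely the stated characterizations for $g_6$. (Note that, unlike in the proof of Theorem~\ref{th5} where the analogue of $g_9$ is evaluated at $K=1$, here the location $K_0>1$ is pinned down more economically by the sign of $g_8(1)$ supplied by Proposition~\ref{pro1}; the explicit $K=1$ value of $g_9$ could also be computed, as for $g_5$, as an alternative.) Finally, part~(2) follows with no further computation from the identity $g_7(K)=g_6(1/K)$ (with $\rho$ in place of $\xi$): the map $K\mapsto1/K$ is an orientation-reversing bijection interchanging $(0,1]$ and $[1,\infty)$, so each monotonicity statement for $g_7$ is the mirror of one already proved for $g_6$.

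The step I expect to be the main obstacle is the second differentiation of $g_8$ and the recognition of the exact algebraic cancellation that isolates the factor $g_9$ in the form above (the analogue of passing from \eqref{g3'} to \eqref{g4}), together with the careful sign-of-$(p-1)$-dependent application of Proposition~\ref{pro3}: for $0<p<1$ one must invoke that proposition with its two arguments swapped and keep correct track of the resulting reciprocals and of the reversed monotonicity of $g_9$. Everything after that — the boundary values of $g_8$, the location of $K_0$ via Proposition~\ref{pro1}, and the translation into statements about $g_6$ and $g_7$ — is routine.
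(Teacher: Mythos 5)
Your proposal is correct and follows essentially the same route as the paper's proof: the same logarithmic differentiation producing $g_8$, the same second differentiation isolating the factor $g_9$, the same sign-of-$(p-1)$ case split handled via Proposition~\ref{pro3} (with arguments swapped when $0<p<1$), the same boundary values of $g_8$, and the same reduction of part~(2) to part~(1) through $g_7(K)=g_6(1/K)$. The only genuine (and valid) deviation is the step you flag yourself: where the paper establishes the sign of $g_9(1)$ by the explicit inequality chains (\ref{In5}) and (\ref{In6}), you instead locate the turning point $K_0>1$ by combining the sign of $g_8(1)=pm_a(r)-m_a(r^p)$ supplied by Proposition~\ref{pro1} with the sign of $g_8$ on $[K_0,\infty)$ forced by its monotonicity there and its vanishing limit at infinity --- a slightly more economical argument reaching the same conclusion.
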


\begin{proof}
Since $g_7(K)=g_6(1/K)$, it suffices to prove the assertion of part (1). Let $x=r^p$, $s=\va_K^a(r)$ and $y=\va_K^a(x)$. Then logarithmic differentiation of $g_{6}$ with respect to $K$ yields
\begin{align}
\frac{1}{g_6(K)}\frac{\partial g_6(K)}{\partial K}
&=\frac{p}{s}\frac{\partial s}{\partial K}
  -\frac{1}{y}\frac{\partial y}{\partial K}-\frac{\xi(a,r)}{K^2}
=\frac{4p\mu_a(r)}{\pi^2 K^2} s'^2\K_a(s)^2
  -\frac{4\mu_a(x)}{\pi^2 K^2} y'^2\K_a(y)^2-\frac{\xi(a,r)}{K^2} \nonumber\\
&=\frac{g_8(K)-\xi(a,r)}{K^2}, \label{In8}
\end{align}
where $g_8(K)=4
\left[p\mu_a(r)s'^2\K_a(s)^2-\mu_a(x)y'^2\K_a(y)^2\right]/{\pi}^2$. By \cite[Lemma 5.4(1)]{AQVV}, one has
\begin{align}\label{In9}
\lim_{K\to 0^+}g_8(K)=p\mu_a(r)-\mu_a(r^p), \quad
\lim_{K\to 1}g_8(K)=pm_a(r)-m_a(r^p), \quad
\lim_{K\to \infty}g_8(K)=0.
\end{align}

Differentiating $g_{8}$ gives
\begin{align}\label{eq5}
\frac{\pi^2}{4}\frac{\partial g_8(K)}{\partial K}
&=p\mu_a(r) \frac{\partial s}{\partial K}
 \left\{\frac{4(1-a)\K_a(s)[\E_a(s)-s'^2\K_a(s)]}{s}-2s\K_a(s)^2\right\}
 \nonumber \\
&\quad- \mu_a(x) \frac{\partial y}{\partial K}
 \left\{\frac{4(1-a)\K_a(y)[\E_a(y)-y'^2\K_a(y)]}{y}-2y\K_a(y)^2\right\}
 \nonumber \\
&=\frac{16(1-a)y'^2 \K_a(y)^3 }{\pi^2 K^2}
   \left[\left(1+\frac{2a-1}{2(1-a)}y^2\right)\K_a(y)-\E_a(y)\right]g_9(K),
\end{align}
where
\begin{align}\label{eq3}
g_9(K)=\mu_a(x)^2-p\mu_a(r)^2
  \frac{s'^2\K_a(s)^3}{y'^2\K_a(y)^3} \frac{\left(1+\frac{2a-1}{2(1-a)}s^2\right)\K_a(s)-\E_a(s)}
  {\left(1+\frac{2a-1}{2(1-a)}y^2\right)\K_a(y)-\E_a(y)}.
\end{align}

Next, we divide the proof into two cases.

Case I: $p>1$. Then $x<r$ and hence $y<s$, it follows from (\ref{eq3}) and Proposition \ref{pro3} that $g_9$ is strictly increasing on $(0,\infty)$, and
\begin{align}\label{eq4}
\lim_{K\to 0^+}g_9(K)=-\infty, \quad
\lim_{K\to \infty}g_9(K)=\mu_a(x)^2.
\end{align}
Furthermore, one can also obtain
\begin{align}\label{g9(1)}
g_9(1)=\mu_a(x)^2-\frac{\pi^2p}{4[\sin(\pi a)]^2}
  \frac{r'^2\K_a(r)\K_a'(r)^2}{x'^2\K_a(x)^3} \frac{\left(1+\frac{2a-1}{2(1-a)}r^2\right)\K_a(r)-\E_a(r)}
  {\left(1+\frac{2a-1}{2(1-a)}x^2\right)\K_a(x)-\E_a(x)}<0
\end{align}
for all $a\in(0,1/2]$, $r\in(0,1)$ and $p\in(1,\infty)$. As a matter of fact, by \cite[Lemma 5.4(1)]{AQVV}, Proposition \ref{pro2}(3) and Proposition \ref{pro1},
\begin{align*}
 &\frac{4[\sin(\pi a)]^2}{\pi^2}x'^2\K_a(x)^3
 \left[\left(1+\frac{2a-1}{2(1-a)}x^2\right)\K_a(x)-\E_a(x)\right] g_9(1)
 \nonumber \\
=&x'^2\K_a(x)\K_a'(x)^2
\left[\left(1+\frac{2a-1}{2(1-a)}x^2\right)\K_a(x)-\E_a(x)\right]\nonumber\\
 &-p r'^2\K_a(r)\K_a'(r)^2
\left[\left(1+\frac{2a-1}{2(1-a)}r^2\right)\K_a(r)-\E_a(r)\right]\nonumber\\
<&x'^2\K_a(x)\K_a'(x)^2
\left[\left(1+\frac{2a-1}{2(1-a)}x^2\right)\K_a(x)-\E_a(x)\right]\nonumber\\
 &-p r'^2\K_a(r)\K_a'(r)\K_a'(x)
\left[\left(1+\frac{2a-1}{2(1-a)}x^2\right)\K_a(x)-\E_a(x)\right]\nonumber\\
=&\frac{\pi \sin(\pi a)}{2}\K_a'(x) [m_a(r^p)-pm_a(r)]
  \left[\left(1+\frac{2a-1}{2(1-a)}x^2\right)\K_a(x)-\E_a(x)\right]<0
\end{align*}
for all $a\in(0,1/2]$, $r\in(0,1)$ and $p\in(1,\infty)$.

Equation \eqref{eq4}, inequality \eqref{g9(1)} and the monotonicty property of $g_{9}$ on $(0,\infty)$ show that there exists $K_1\in(1,\infty)$ such that $g_9(K)<0$ for $K\in(0,K_1)$, and $g_9(K)>0$ for $K\in(K_1,\infty)$. This, together with (\ref{eq5}), leads to the conclusion that $g_8$ is strictly decreasing on $(0,K_1)$, and then strictly increasing on $(K_1,\infty)$. Combining with \eqref{In8}, \eqref{In9} and Proposition \ref{pro1}, we have
\begin{align*}
&\frac{\partial g_6}{\partial K}\geq0 \,\,  \text{for} \,\,  K\in(0,1]
 \Longleftrightarrow \xi(a,r)\leq \inf_{K\in(0,1]} g_8(K)
 =pm_a(r)-m_a(r^p), \\
&\frac{\partial g_6}{\partial K}\leq0 \,\,  \text{for} \,\,  K\in(0,1]
 \Longleftrightarrow \xi(a,r)\geq \sup_{K\in(0,1]} g_8(K)
 =p\mu_a(r)-\mu_a(r^p), \\
&\frac{\partial g_6}{\partial K}\leq0 \,\,  \text{for} \,\,  K\in[1,\infty)
\Longleftrightarrow \xi(a,r)\geq \sup_{K\in[1,\infty)} g_8(K)
 =pm_a(r)-m_a(r^p),
\end{align*}
which yield the assertion of part (1) for $p>1$.

Case II:  $0<p<1$. Then $x>r$ and hence $y>s$, it follows from (\ref{eq3}) and Proposition \ref{pro3} that $g_9$ is strictly decreasing on $(0,\infty)$, and
\begin{align}\label{eq6}
\lim_{K\to 0^+}g_9(K)=\mu_a(x)^2, \quad
\lim_{K\to \infty}g_9(K)=-\infty.
\end{align}
In this case,
\begin{align}\label{In6}
g_9(1)=\mu_a(x)^2-\frac{\pi^2p}{4[\sin(\pi a)]^2}
  \frac{r'^2\K_a(r)\K_a'(r)^2}{x'^2\K_a(x)^3} \frac{\left(1+\frac{2a-1}{2(1-a)}r^2\right)\K_a(r)-\E_a(r)}
  {\left(1+\frac{2a-1}{2(1-a)}x^2\right)\K_a(x)-\E_a(x)}>0
\end{align}
for all $a\in(0,1/2]$, $r\in(0,1)$ and $p\in(1,\infty)$, since
\begin{align*}
 &\frac{4[\sin(\pi a)]^2}{\pi^2}x'^2\K_a(x)^3
 \left[\left(1+\frac{2a-1}{2(1-a)}x^2\right)\K_a(x)-\E_a(x)\right] g_9(1)\\
>&x'^2\K_a(x)\K_a'(x)\K_a'(r)
  \left[\left(1+\frac{2a-1}{2(1-a)}r^2\right)\K_a(r)-\E_a(r)\right] \\
 &-p r'^2\K_a(r)\K_a'(r)^2
  \left[\left(1+\frac{2a-1}{2(1-a)}r^2\right)\K_a(r)-\E_a(r)\right] \\
=&\frac{\pi \sin(\pi a)}{2}\K_a'(r) [m_a(r^p)-pm_a(r)]
  \left[\left(1+\frac{2a-1}{2(1-a)}r^2\right)\K_a(r)-\E_a(r)\right]>0
\end{align*}
for all $a\in(0,1/2]$, $r\in(0,1)$ and $p\in(1,\infty)$.

Equation \eqref{eq6}, inequality \eqref{In6} and the monotonicity property of $g_9$ imply that there exists $K_2\in(1,\infty)$ such that $g_9(K)>0$ for $K\in(0,K_2)$, and $g_9(K)<0$ for $K\in(K_2,\infty)$. This, in conjunction with \eqref{eq5}, implies that $g_8$ is strictly increasing on $(0,K_2)$, and then strictly decreasing on $(K_2,\infty)$. Therefore, from \eqref{In8}, \eqref{In9} and Proposition \ref{pro1}, one has
\begin{align*}
&\frac{\partial g_6}{\partial K}\geq0 \,\,  \text{for} \,\,  K\in(0,1]
 \Longleftrightarrow \xi(a,r)\leq \inf_{K\in(0,1]} g_8(K)
 =p\mu_a(r)-\mu_a(r^p), \\
&\frac{\partial g_6}{\partial K}\leq0 \,\,  \text{for} \,\,  K\in(0,1]
 \Longleftrightarrow \xi(a,r)\geq \sup_{K\in(0,1]} g_8(K)
 =pm_a(r)-m_a(r^p), \\
&\frac{\partial g_6}{\partial K}\geq0 \,\,  \text{for} \,\,  K\in[1,\infty)
 \Longleftrightarrow \xi(a,r)\leq \inf_{K\in[1,\infty)} g_8(K)
 =pm_a(r)-m_a(r^p),
\end{align*}
which yield the assertion of part (1) for $0<p<1$.

This completes the proof.
\end{proof}

%% Proof of Theorem 3.2
\begin{proof}[{\bf Proof of Theorem \ref{th4}}] It suffices to prove part (1), because the proof of part (2) is similar. For $0<p<1$,   since the function $g_{6}(K)$  with $\xi(a,r)=pm_{a}(r)-m_{a}(r^p)$ in Theorem \ref{th6}(1) is strictly increasing on $[1,\infty)$, then $g_{6}(K)\geq g_{6}(1)=\e^{pm_{a}(r)-m_{a}(r^p)}$, from which the first inequality in \eqref{th4eq1} holds. On the other hand, in the particular case $\rho(a,r)=pm_{a}(r)-m_{a}(r^p)$ ($\rho(a,r)=p\mu_{a}(r)-\mu_{a}(r^p)$, respectively), the function $g_{7}(K)$ in Theorem \ref{th6}(2) is strictly increasing (decreasing, respectively) on $[1,\infty)$, so that $g_{7}(K)\geq g_{7}(1)=\e^{pm_{a}(r)-m_{a}(r^p)}$ ($g_{7}(K)\leq g_{7}(1)=\e^{p\mu_{a}(r)-\mu_{a}(r^p)}$, respectively) for $K\in[1,\infty)$, from which the left-hand (right-hand, respectively) of the double inequality in \eqref{th4eq1} holds. Until now, the asserted ``if" results of part (1) have been proved. To obtain the ``only if", taking logarithmic of two inequalities in \eqref{th4eq1}, and then multiplying by $K/(K-1)$ on the two sides of the first inequality and $1/(1-K)$ for the second one, and finally taking the limiting values, we get
\begin{equation*}
	\delta(a,r)\leq \lim_{K\rightarrow 1^+}\frac{p\log{\va_K^a(r)}-\log{\va_K^a(r^p)}}{1-1/K}=pm_{a}(r)-m_{a}(r^p),
\end{equation*}
\begin{equation*}
		\zeta(a,r)\geq \lim_{K\rightarrow 1^+}\frac{p\log{\va_{1/K}^a(r)}-\log{\va_{1/K}^a(r^p)}}{1-K}=pm_{a}(r)-m_{a}(r^p),
\end{equation*}
\begin{equation*}		
	\eta(a,r)\leq \lim_{K\rightarrow \infty}\frac{p\log{\va_{1/K}^a(r)}-\log{\va_{1/K}^a(r^p)}}{1-K}=p\mu_{a}(r)-\mu_{a}(r^p).
\end{equation*}
\end{proof}

\begin{remark}\rm{} If $a=1/2$, then Theorems \ref{th3} and \ref{th4} reduce to Theorems \ref{th1} and \ref{th2}, respectively. Letting $t\to 0$, then Theorem \ref{th5} reduces to  \cite[Theorem 3]{WZC2007}.
\end{remark}

\begin{remark}\rm{} It is not difficult from Theorems \ref{th3} and \ref{th4} to see that one can obtain new inequalities for $\varphi_{K}^{a}(r)$ by bounding the functions $m_{a}(r)$ and $\mu_{a}(r)$. For example, it was proved in \cite[Theorem 3.5]{WCZ2019} and \cite[Theorem 1]{WZC2007} that
	\begin{equation}\label{reinequ}
	\frac{R(a)}{2}r'(1-r)<m_{a}(r)+\log{r}<\frac{R(a)}{2}{r'}^{2-\lambda}<	\frac{R(a)}{2}r'
	\end{equation}
for all $a\in(0,1/2]$ and $r\in(0,1)$ with $\lambda=2a(1-a)(1-a+a^2)/[a^2+(1-a)^2]$. Then use of \eqref{reinequ} and the decreasing property of $r\mapsto m_{a}(r)+\log{r}$ on $(0,1)$ leads to the conclusion that, for all $(a,r,t)\in(0,1/2]\times(0,1)\times(0,1)$,
\begin{align*}
&m_{a}(r)+m_{a}(t)-m_{a}(rt)=m_{a}(r)+\log{r}+m_{a}(t)+\log{t}-[m_{a}(rt)+\log{rt}]\\
\leq& m_{a}(r)+\log{r}+m_{a}(t)+\log{t}-\frac{1}{2}\left[m_{a}(r)+\log{r}+m_{a}(t)+\log{t}\right]\\
=&\frac{1}{2}\left[m_{a}(r)+\log{r}+m_{a}(t)+\log{t}\right]\leq \frac{R(a)}{4}\left[{r'}^{2-\lambda}+{t'}^{2-\lambda}\right]
\end{align*}
and, for $p>1$,
\begin{align*}
&pm_{a}(r)-m_{a}(r^p)=p\left[m_{a}(r)+\log{r}\right]-\left[m_{a}(r^p)+\log{r^p}\right]\\
\leq &(p-1)\left[m_{a}(r)+\log{r}\right]\leq \frac{R(a)(p-1)}{2}{r'}^{2-\lambda}
\end{align*}
and while for $0<p<1$,
\begin{align*}
	&pm_{a}(r)-m_{a}(r^p)=p\left[m_{a}(r)+\log{r}\right]-\left[m_{a}(r^p)+\log{r^p}\right]\\
	\geq &(p-1)\left[m_{a}(r)+\log{r}\right]\geq \frac{R(a)(p-1)}{2}{r'}^{2-\lambda},
\end{align*}
so that inequalities
\begin{equation*}
	\frac{\va_K^a(r)\va_K^a(t)}{\va_K^a(rt)}
	<{\e}^{\frac{R(a)}{4}({r'}^{2-\lambda}+{t'}^{2-\lambda})\left(1-\frac1K\right)},
\end{equation*}
and
\begin{equation*}
	\frac{\va_K^a(r)^p}{\va_K^a(r^p)}>{\e}^{\frac{R(a)}{2}(p-1){r'}^{2-\lambda}\left(1-\frac1K\right)}, \quad p\in(0,1)
\end{equation*}
\begin{equation*}
		\frac{\va_K^a(r)^p}{\va_K^a(r^p)}<{\e}^{\frac{R(a)}{2}(p-1){r'}^{2-\lambda}\left(1-\frac1K\right)}, \quad p\in(1,\infty)
\end{equation*}
hold for all $a\in(0,1/2]$, $r\in(0,1)$, $t\in(0,1)$ and $K\in[1,\infty)$.
\end{remark}

\end{document}